\documentclass[reqno,12pt]{amsart}
\usepackage[letterpaper, margin=1in]{geometry}
\RequirePackage{amsmath,amssymb,amsthm,graphicx,mathrsfs,url,slashed,subcaption}
\RequirePackage[usenames,dvipsnames]{xcolor}
\RequirePackage[colorlinks=true,linkcolor=Red,citecolor=Green]{hyperref}
\RequirePackage{amsxtra}
\usepackage{cancel}
\usepackage{tikz-cd}
\usepackage{xcolor}
\usepackage{soul}
\usepackage{setspace}

\title{Computable categoricity relative to a c.e.\ degree}
\author{Java Darleen Villano}
\date{\today}
\subjclass[2020]{03C57, 03D25}
\keywords{Computable structure theory, computable categoricity, relative categoricity, computably enumerable degrees}
\address{Department of Mathematics\\ University of Connecticut\\ Storrs, Connecticut 06269}
\email{javavill@uconn.edu}

\newcommand{\NN}{\mathbb{N}}

\newcommand{\A}{\mathcal{A}}
\newcommand{\G}{\mathcal{G}}
\newcommand{\B}{\mathcal{B}}

\renewcommand{\epsilon}{\varepsilon}
\newcommand{\<}{\langle}
\renewcommand{\>}{\rangle}


\newcommand{\lessT}{<_{\text{T}}}

\renewcommand{\phi}{\varphi}

\newcommand{\use}{\mathrm{use}}
\newcommand{\age}{\mathrm{age}}





\newtheorem{theorem}{Theorem}[section]

\newtheorem{lemma}[theorem]{Lemma}

\newtheorem{question}[theorem]{Question}

\theoremstyle{definition}
\newtheorem{definition}[theorem]{Definition}

\makeatletter

\makeatother

\numberwithin{equation}{section}


\def\XXint#1#2#3{{\setbox0=\hbox{$#1{#2#3}{\int}$ }
\vcenter{\hbox{$#2#3$ }}\kern-.6\wd0}}

\usepackage[backend=bibtex,style=alphabetic,giveninits=true]{biblatex}

\addbibresource{topics.bib}
\renewbibmacro{in:}{}
\DeclareFieldFormat{pages}{#1}

\begin{document}
\begin{abstract}
A computable graph $\G$ is computably categorical relative to a degree $\mathbf{d}$ if and only if for all $\mathbf{d}$-computable copies $\B$ of $\G$, there is a $\mathbf{d}$-computable isomorphism $f:\G\to\B$. In this paper, we prove that for every computable partially ordered set $P$ and computable partition $P=P_0\sqcup P_1$, there exists a computable computably categorical graph $\G$ and an embedding $h$ of $P$ into the c.e.\ degrees where $\G$ is computably categorical relative to all degrees in $h(P_0)$ and not computably categorical relative to any degree in $h(P_1)$. This is a generalization of a result by Downey, Harrison-Trainor, and Melnikov in \cite{MR4291596}.

\end{abstract}
\maketitle



\section{Introduction}
In computable structure theory, we are interested in effectivizing model theoretic notions and constructions. For a general background on computable structure theory, see Ash and Knight \cite{Ash2000-ASHCSA} or Montalb{\' a}n \cite{Montalban2021}. In particular, many people have examined the complexity of isomorphisms between structures within the same isomorphism type. We restrict ourselves to countable structures in a computable language and assume their domain is $\omega$. 

A computable structure $\mathcal{A}$ is \textbf{computably categorical} if for any computable copy $\mathcal{B}$ of $\mathcal{A}$, there exists a computable isomorphism between $\mathcal{A}$ and $\mathcal{B}$. There are many known examples of computably categorical structures including computable linear orderings with only finitely many adjacent pairs \cite{10.2307/2043534}, computable fields of finite transcendence degree \cite{https://doi.org/10.1002/malq.19770231902}, and computable ordered groups of finite rank \cite{GONCHAROV2003102}. In each of these examples, the condition given turns out to be both necessary and sufficient for computable categoricity. There are also examples of computable structures which are not computably categorical, such as $(\NN,\leq)$ as a linear order.

We are also interested in studying relativizations of computable categoricity. The most studied relativization of this notion is relative computable categoricity. A computable structure $\mathcal{A}$ is \textbf{relatively computably categorical} if for any copy $\mathcal{B}$ of $\mathcal{A}$, there exists a $\mathcal{B}$-computable isomorphism between $\mathcal{A}$ and $\mathcal{B}$. We can think of this relativization as relativizing categoricity to \textit{all} degrees at once since we do not fix the complexity of our copies of $\mathcal{A}$. If a structure $\mathcal{A}$ is relatively computably categorical, then it is computably categorical. The converse does not hold in general, but often holds for structures where there is a purely algebraic characterization of computable categoricity. In particular, the examples of computable categorical structures listed above are also relatively computably categorical.

The connection between a purely algebraic characterization of computable categoricity and the equivalence of computable categoricity and relative computable categoricity was clarified by the following result which was independently discovered by Ash, Knight, Manasse, and Slaman \cite{ASH1989195}, and Chisholm \cite{Chisholm90}.

\begin{theorem}[Ash et al., Chisholm]\label{formal c.e. Scott family}
    A structure $\mathcal{A}$ is relatively computably categorical if and only if it has a formally $\Sigma_1^0$ Scott family.
\end{theorem}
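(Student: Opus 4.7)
The plan is to prove both directions separately, using very different strategies.

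For the forward direction, suppose $\mathcal{A}$ has a formally $\Sigma_1^0$ Scott family $\Phi$ with parameter tuple $\bar{c}\in\mathcal{A}$, and let $\mathcal{B}$ be any copy of $\mathcal{A}$. I would build a $\mathcal{B}$-computable isomorphism $f:\mathcal{A}\to\mathcal{B}$ by back-and-forth. Using $\mathcal{B}$ as an oracle, the atomic diagram of $\mathcal{B}$ is decidable, so I can enumerate $\Sigma_1^0$ facts in $\mathcal{B}$ while simultaneously enumerating $\Phi$. First I would locate an image $\bar{d}\in\mathcal{B}$ for $\bar{c}$ by searching for a tuple satisfying the same Scott-family formula as $\bar{c}$; this search terminates because $\mathcal{B}\cong\mathcal{A}$, and the Scott family property forces $\bar{d}$ to be automorphic to the image of $\bar{c}$ under any true isomorphism. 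At each subsequent stage, given a partial isomorphism $\bar{a}\mapsto\bar{b}$ witnessed by a common $\phi\in\Phi$, I would extend by the next unused element of $A$ (or of $B$ in the ``back'' step), find a formula in $\Phi$ witnessing the extended tuple, and search the other structure for a matching extension. Each such search terminates by the Scott family property together with the enumerability of $\Phi$, and the union of these approximations is a $\mathcal{B}$-computable isomorphism.

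For the converse, I would argue by contrapositive. Assume $\mathcal{A}$ has no formally $\Sigma_1^0$ Scott family; I would build a copy $\mathcal{B}\cong\mathcal{A}$ via a priority construction on a tree of finite partial copies $\sigma_s:\omega\rightharpoonup A$, defeating every potential isomorphism $\Phi_e^{\mathcal{B}}$. At a stage working on requirement $R_e$, I would wait for $\Phi_e^{\sigma_s}$ to commit on some tuple $\bar{n}\in\omega$ with provisional images $\bar{a}\in\mathcal{A}$. The finite data in $\sigma_s$ can be encoded as a $\Sigma_1^0$ formula (with parameters drawn from the range of $\sigma_s$) satisfied by $\bar{a}$. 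The assumption that no formally $\Sigma_1^0$ Scott family exists translates into: for every finite parameter tuple, no c.e.\ family of $\Sigma_1^0$ formulas separates all orbits, so I can find $\bar{a}'$ not in the orbit of $\bar{a}$ satisfying the same $\Sigma_1^0$ facts. Extending $\sigma_s$ so that $\bar{n}$ is ultimately mapped to $\bar{a}'$ rather than $\bar{a}$ spoils the commitment of $\Phi_e^{\mathcal{B}}$ while preserving the already-built diagram.

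The main obstacle will be in the second direction, specifically in making the ``redirection'' step precise and coherent with the rest of the construction. The standard trick is to implement the redirection via an automorphism of $\mathcal{A}$ that fixes the relevant parameters and sends $\bar{a}$ to $\bar{a}'$; applying this automorphism to the partial map relabels the targets without changing the induced atomic diagram, thereby preserving consistency with past commitments. Interleaving these diagonalization requirements with Henkin-style requirements that ensure every element of $\omega$ ends up in the domain of $\mathcal{B}$ with full image $A$, all organized by a finite-injury priority argument, then completes the construction.
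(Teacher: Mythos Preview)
The paper does not prove this theorem. Theorem~\ref{formal c.e. Scott family} is stated in the introduction as background, attributed to Ash--Knight--Manasse--Slaman and to Chisholm, and is used only as motivation for the relativized notion of categoricity studied later. No proof or proof sketch appears anywhere in the paper, so there is nothing to compare your proposal against.

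That said, your proposal contains a genuine internal contradiction in the hard direction. You write that you will find $\bar{a}'$ \emph{not in the orbit of} $\bar{a}$ (over the relevant parameters), and then in the next paragraph you propose to implement the redirection ``via an automorphism of $\mathcal{A}$ that fixes the relevant parameters and sends $\bar{a}$ to $\bar{a}'$.'' These two statements are incompatible: if such an automorphism existed, $\bar{a}'$ would by definition lie in the orbit of $\bar{a}$. What actually happens in the forcing argument is that two different tuples are in play. The functional $\Phi_e$ commits $\bar n\in\omega$ to a target $\bar a\in\mathcal{A}$, while the current condition $\sigma_s$ assigns $\bar n$ an ``identity'' $\bar b=\sigma_s(\bar n)\in\mathcal{A}$. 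The redirection replaces $\bar b$ by some $\bar b'$ realizing the same finite atomic type over the parameters (so the already-built diagram is preserved) but with $\bar b'$ \emph{not} automorphic to $\bar a$; no automorphism taking $\bar a$ to anything is required. The failure of a formally $\Sigma_1^0$ Scott family is exactly what guarantees such a $\bar b'$ exists densely. Your easy direction (Scott family implies relative computable categoricity via back-and-forth) is fine, though note you have labeled it ``forward'' when it is the converse of the biconditional as stated.
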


In this paper, we study the following relativization of categoricity, where for a computable structure $\mathcal{A}$, we consider noncomputable copies of $\mathcal{A}$ and ask for isomorphisms between $\mathcal{A}$ and those copies to have the same complexity as the copies.

\begin{definition}
    Let $X\in 2^\NN$. A structure $\mathcal{A}$ is \textbf{computably categorical relative to} $\mathbf{X}$ if and only if for all $X$-computable copies $\mathcal{B}$ of $\mathcal{A}$, there is an $X$-computable isomorphism $f:\mathcal{A}\to\mathcal{B}$. 
\end{definition}

We have that a computable structure $\mathcal{A}$ is relatively computably categorical when it is computably categorical relative to all $X\in2^\NN$. Given a computable structure $\mathcal{A}$, we can consider the set of $X\in2^\NN$ such that $\mathcal{A}$ is computably categorical relative to $X$ (or not). By Martin's result in \cite{martin68}, we obtain that either this set contains a cone in the Turing degrees or is disjoint from one. This motivates us to include the following definition which appears in a paper by Csima and Harrison-Trainor \cite{csima17}.

\begin{definition}\label{definition: categorical on a cone above a degree}
    A structure $\mathcal{A}$ is \textbf{computably categorical on a cone above} $\mathbf{d}$ if for all $\mathbf{c}\geq\mathbf{d}$, whenever $\mathcal{B}$ and $\mathcal{C}$ are $\mathbf{c}$-computable copies of $\mathcal{A}$, there is a $\mathbf{c}$-computable isomorphism between $\mathcal{B}$ and $\mathcal{C}$.
\end{definition}

We can think of $\mathcal{A}$ being relatively computably categorical as it being computably categorical on a cone above $\mathbf{0}$. We also have that a structure $\mathcal{A}$ is computably categorical on a cone above $\mathbf{d}$ if and only if it is computably categorical relative to $\mathbf{c}$ for every $\mathbf{c}\geq\mathbf{d}$. Downey, Harrison-Trainor, and Melnikov in \cite{MR4291596} showed that for every computable structure $\mathcal{A}$, the behavior of being categorical relative to a degree stabilizes on a cone above $\mathbf{0}''$. That is, either $\mathcal{A}$ is computably categorical relative to all degrees above $\mathbf{0}''$, or to no degree above $\mathbf{0}''$. Moreover, if a computable structure $\mathcal{A}$ is computably categorical on a cone above any degree $\mathbf{d}$, then it is also computably categorical on a cone above $\mathbf{d}\oplus\mathbf{0}''$. Since $\mathbf{d}\oplus\mathbf{0}''\geq\mathbf{0}''$, by the previous fact, we get that $\mathcal{A}$ is computably categorical on a cone above $\mathbf{0}''$. Therefore, by Martin's result, the set of degrees $\mathbf{d}$ such that $\mathcal{A}$ is computably categorical relative to $\mathbf{d}$ either contains the cone above $\mathbf{0}''$ or does not contain any cone at all.

The situation below $\mathbf{0}'$ was shown to be vastly different by the authors in \cite{MR4291596} proving that being computably categorical relative to a degree is not a monotonic property of a structure in the following way. 

\begin{theorem}[Downey, Harrison-Trainor, Melnikov]\label{DHM result}
    There is a computable structure $\A$ and c.e. degrees $0=Y_0\lessT X_0\lessT Y_1\lessT X_1\lessT\dots$ such that
    \begin{itemize}
        \item[(1)] $\A$ is computably categorical relative to $Y_i$ for each $i$,
        \item[(2)] $\A$ is not computably categorical relative to $X_i$ for each $i$,
        \item[(3)] $\A$ is relatively computably categorical to $\mathbf{0'}$.
    \end{itemize}
\end{theorem}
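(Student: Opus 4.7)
The plan is to build the structure $\A$ and the chain of c.e.\ degrees simultaneously, via a nested priority argument. At the heart of the argument lies an encoding that links the Turing degree of an isomorphism to a decoding problem whose difficulty increases along the chain. I would work with a graph, since graphs are universal for the purpose of computable categoricity.

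First, I would design $\A$ to be a disjoint union of component graphs $C_0, C_1, C_2, \ldots$, each built from pieces whose internal isomorphism type can be modified uniformly during the construction (for example, a ``counter'' component whose final value is a c.e.\ parameter). An isomorphism $\A \to \B$ then essentially consists of a matching of the components of $\A$ with those of $\B$ together with isomorphisms on each component. The construction will arrange that recovering the matching for components at level $i$ is equivalent to computing a certain set $E_i$ which is $Y_{i+1}$-computable but not $X_i$-computable. The chain $Y_0 \lessT X_0 \lessT Y_1 \lessT X_1 \lessT \cdots$ would be constructed alongside $\A$ by an infinite-injury priority argument in Friedberg--Muchnik style, inserting permitting markers from $Y_{i+1}$ into the coding that lets $X_i$-coded information become decodable, while ensuring that everything lies below $\mathbf{0}'$.

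For item~(1), I would verify that, with oracle $Y_i$, one can uniformly wait for distinguishing markers on the components of any $Y_i$-computable copy $\B$ and thereby computably pair components, yielding a $Y_i$-computable isomorphism. For item~(2), I would diagonalize: against each potential $X_i$-computable isomorphism $\Psi_e^{X_i}$, I would build an $X_i$-computable copy $\B^{[e]}$ in which two components at level $i$ are swapped only after $\Psi_e^{X_i}$ has committed to a finite piece of a would-be isomorphism, defeating $\Psi_e^{X_i}$. The $\B^{[e]}$ would then be amalgamated into a single $X_i$-computable copy on which every potential $\Psi_e^{X_i}$ fails. Item~(3) follows because $\mathbf{0}'$ can compute all the c.e.\ markers at every level and hence uniformly recover the identification of components of any copy, so the structure has a formally $\Sigma_2^0$ Scott family of the sort required by the relativization of Theorem~\ref{formal c.e. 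Scott family}.

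The main obstacle is compatibility between requirements at adjacent levels. Since $X_i \lessT Y_{i+1}$, the diagonalization that kills $X_i$-categoricity must not inadvertently kill $Y_{i+1}$-categoricity; this forces the markers to be designed so that the $Y_{i+1}$ oracle detects precisely which swap was performed at level $i$, while the $X_i$ oracle cannot. Organizing these markers consistently across infinitely many levels is where the real work lies, and I would handle it with a tree-of-strategies construction in which nodes at depth $i$ carry the paired requirements for $(X_i, Y_{i+1})$, with injury controlled by a priority ordering on the tree and the true path ensuring that, along each branch, finitely many swaps occur cofinally often enough to diagonalize but sparsely enough to be recoverable at the next level.
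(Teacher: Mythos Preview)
This theorem is cited from \cite{MR4291596} rather than proved in the present paper; the paper proves a generalization (Theorem~\ref{main theorem}) whose construction, specialized to a chain with alternating partition, yields (1) and (2), and whose graph $\G$ has an obvious $\Sigma^0_2$ Scott family giving (3). So the comparison below is with that construction.

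Your outline has the right skeleton---component graph, swap-based diagonalization, tree of strategies---and you correctly flag the central tension: diagonalizing against $X_i$ must not destroy $Y_{i+1}$-categoricity. The gap is in item~(1). You propose to ``verify'' $Y_i$-categoricity by waiting for distinguishing markers, as if it were a structural fact to be checked after the construction. In the paper one must \emph{actively build} a $D_p$-computable isomorphism $g^{D_p}$ into each $D_p$-computable copy (the $T_i^p$-strategies), with the adversary controlling the copy. The concrete attack is this: when an $R^q$-strategy enumerates its diagonalizing witness $v_\alpha$ into $A_q$ with $q<p$, the number $\<v_\alpha,q\>$ enters $D_p$ too, so the adversary's $D_p$-copy can change under you; if $g^{D_p}$ is already committed on the affected components with use below $\<v_\alpha,q\>$, the adversary can then add loops making $g^{D_p}$ permanently wrong. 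The paper's fix is a \emph{use-lifting} step: before enumerating $v_\alpha$, the $R$-strategy first enumerates the current $T$-use $u_{\gamma,n}$ into $A_p$ (not $A_q$), wiping $g^{D_p}$ on those components so that it will be redefined with use above $\<v_\alpha,q\>$ and hence become undefined again when $v_\alpha$ enters. Your ``markers designed so that $Y_{i+1}$ detects the swap'' does not address this---detection is not the problem, premature commitment of the isomorphism is. Without the use-lifting device the $T$-strategies can be defeated, and this is the one technical idea your plan is missing.
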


In this paper, we extend this result to partial orders of c.e.\ degrees, exhibiting that for the particular structure we build, the full extent in which it can change its categorical behavior relative to some c.e.\ degrees.

\begin{theorem}\label{main theorem}
    Let $P=(P,\leq)$ be a computable partially ordered set and let $P=P_0\sqcup P_1$ be a computable partition. Then, there exists a computable computably categorical directed graph $\G$ and an embedding $h$ of $P$ into the c.e.\ degrees where $\G$ is computably categorical relative to each degree in $h(P_0)$ and is not computably categorical relative to each degree in $h(P_1)$.
\end{theorem}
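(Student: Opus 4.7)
The plan is to generalize the construction of Downey, Harrison-Trainor, and Melnikov \cite{MR4291596} by organizing the construction around the given partial order $P$ rather than a single linear chain. I will simultaneously build the computable directed graph $\G$ and, for each $p \in P$, a c.e. set $A_p$ whose Turing degree will be $h(p)$, along with, for each $p \in P_0$, a partial computable functional $\Psi_p$ serving as the uniform isomorphism procedure relative to $A_p$. The requirements are of three kinds: Sacks/Lachlan-style embedding requirements ensuring $A_q \leq_T A_p$ iff $q \leq_P p$ (so that $h$ is a well-defined order embedding); diagonalization requirements $N_{p,e}$ for each $p \in P_1$ and $e \in \omega$ stating that $\Phi_e^{A_p}$ is not an isomorphism from $\G$ to a distinguished $A_p$-computable copy $\B_{p,e}$; and positive categoricity requirements $C_p$ for each $p \in P_0$ stating that $\Psi_p^{A_p,\B}$ is an isomorphism for every $A_p$-computable copy $\B$ of $\G$.

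The graph $\G$ will be a disjoint union of gadgets, one for each requirement $N_{p,e}$, modeled on the non-categoricity gadgets in \cite{MR4291596}. Each gadget has two isomorphic-but-distinguishable ``presentations.'' I arrange that in the canonical copy $\G$ the correct presentation is detectable computably (this secures the plain computable categoricity of $\G$), while in the diagonalizing copy $\B_{p,e}$ the gadget is permuted in a way driven by the enumeration of $A_p$. This produces, at each $p \in P_1$, the ambiguity needed to defeat every $\Phi_e^{A_p}$.

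For $p \in P_0$, the functional $\Psi_p$ runs gadget by gadget on an arbitrary $A_p$-computable copy $\B$. For a gadget associated to $(q,e)$ with $q \leq_P p$, $\Psi_p$ exploits the embedding reduction $A_q \leq_T A_p$ to read off the permutation applied to that gadget in $\B$ and align it with the canonical presentation in $\G$. For a gadget associated to $(q,e)$ with $q \not\leq_P p$, the construction is engineered so that no such permutation ever surfaces in an $A_p$-computable copy, and the canonical identification suffices. The crucial structural principle is that diagonalization actions for bad nodes $q \not\leq_P p$ are invisible to copies at level $p$; this is where the computability of the partition $P = P_0 \sqcup P_1$ and of the order $\leq_P$ enters, since $\Psi_p$ must decide uniformly which branch to take.

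The main obstacle I expect is coordinating the priority argument across a general partial order. Unlike the linear chain in \cite{MR4291596}, there is typically no unique ``next good level above'' a given $p \in P_1$, so the decoding $\Psi_{p'}$ must succeed uniformly for \emph{every} $p' \in P_0$ with $p' \geq_P p$, while remaining oblivious to the $A_q$ with $q \not\leq_P p'$. I anticipate using a tree of strategies indexed by a computable listing of requirements, with Sacks-style length-of-agreement preservation to prevent unwanted Turing reductions, and with each $N_{p,e}$-strategy delaying its action until it is permitted by $A_p$ alone. The technical heart of the proof will be verifying simultaneously that (i) no reductions $A_q \leq_T A_p$ arise when $q \not\leq_P p$, (ii) each $N_{p,e}$ is met against the true $\Phi_e^{A_p}$, and (iii) each $C_p$ holds against \emph{all} $A_p$-computable copies, not merely the $\B_{p,e}$ produced in the construction.
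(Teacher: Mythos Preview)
Your proposal has a genuine gap at its structural core. You write that ``diagonalization actions for bad nodes $q \not\leq_P p$ are invisible to copies at level $p$'' and that for such gadgets ``the canonical identification suffices.'' This is not true and cannot be engineered: the $A_p$-computable copy $\B$ in the requirement $C_p$ is \emph{arbitrary}, built by an opponent, not one of your $\B_{q,e}$. The opponent need not consult $A_q$ at all to present a gadget in permuted form; if your gadget really has ``two isomorphic-but-distinguishable presentations,'' the opponent can pick either one using no oracle whatsoever. So $\Psi_p$ cannot rely on $q \not\leq_P p$ to dodge the decoding problem. In fact you have the hard case backwards: the dangerous situation is $q <_P p$ with $q \in P_1$, because then your own diagonalizing enumeration into $A_q$ bleeds into the $A_p$-oracle and can freeze $\Psi_p$ on a wrong commitment while the opponent flips the gadget underneath it. The paper resolves this not by a uniform decoder $\Psi_p$ but by building, for each index $i$, a separate $D_p$-computable isomorphism via a $T_i^p$-strategy that always maps to the \emph{oldest, lexicographically least} candidate copy; the crucial device is a ``challenge'' protocol in which an $R_e^q$-strategy, before completing its diagonalization, forces each higher-priority $T_i^p$ with $q < p$ to lift its use (by enumerating the old use marker $u_{\gamma,n}$ into $A_p$), so that the subsequent $A_q$-enumeration fully undefines the $T$-map rather than trapping it.

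Two smaller points. First, taking $h(p) = \deg(A_p)$ and asking directly for $A_q \leq_T A_p \iff q \leq_P p$ is awkward: you would need to code each $A_q$ into $A_p$ whenever $q \leq p$, which tangles with the diagonalization. The paper instead builds an \emph{independent} family $\{A_p\}$ and sets $h(p) = \deg\bigl(\bigoplus_{q \leq p} A_q\bigr)$, so the order-preservation is automatic and only the non-reductions $A_p \not\leq_T \bigoplus_{q \neq p} A_q$ need to be enforced. Second, the paper builds a single bad copy $\B_q$ per $q \in P_1$ (globally, not per index $e$) and makes the components of $\G$ ultimately \emph{rigid} (each carries a unique loop signature $5n+3$ versus $5n+4$), so that the difficulty is purely in the approximation dynamics rather than in any final-stage ambiguity of the gadgets themselves.
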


The proof is a priority construction on a tree of strategies, using several key ideas from the proof of Theorem \ref{DHM result} in \cite{MR4291596} along with some new techniques. In Section \ref{section 2}, we introduce informal descriptions for the strategies we need to satisfy our requirements for the construction and discuss important interactions between certain strategies. In Section \ref{section 3}, we detail the formal strategies, state and prove auxiliary lemmas about our construction, and state and prove the main verification lemma.

\subsection{Acknowledgements}
I would like to thank my adviser Reed Solomon for suggesting this project and providing many helpful comments as I wrote this paper. I would also like to thank the anonymous referee for their valuable comments and suggestions for improvement.

This research was partially supported by a Focused Research Group grant from the National Science Foundation of the United States, DMS-1854355.

\section{Informal strategies}\label{section 2}

To prove Theorem \ref{main theorem}, we have four goals to achieve within our construction, giving us four types of requirements to satisfy. In this section, we give informal descriptions of the strategies needed to satisfy each requirement in isolation, and then describe the interactions which arise when we employ these strategies together.

\subsection{Embedding $P$ into the c.e.\ degrees}\label{informal N strategy}
We embed the poset $P$ into the c.e.\ degrees in a standard way by constructing an independent family of uniformly c.e.\ sets $A_p$ for $p\in P$. We fix the following notation:
    \[
    \widehat{D_p} := \bigoplus_{q\neq p} A_q.
    \]

For each $p\in P$, we ensure that $A_p\not\leq_T\widehat{D_p}$. The image of $p$ will be the c.e.\ set $D_p=\bigoplus\limits_{q\leq p} A_q$. Because the $A_p$ are independent, our embedding is order-preserving, i.e., $p\leq q$ in $P$ if and only if $D_p\leq_T D_q$.

For each $p\in P$ and $e\in\omega$, we define the \textbf{independence requirement}:
    \[
    N_e^p : \Phi_e^{\widehat{D_p}} \neq A_p.
    \]

In order to satisfy an $N_e^p$ requirement in isolation, we use the following $N_e^p$-strategy. Let $\alpha$ be an $N_e^p$-strategy. When $\alpha$ is first eligible to act, it picks a large number $x_\alpha$.
Once $x_\alpha$ is defined, $\alpha$ checks if $\Phi_e^{\widehat{D_p}}(x_\alpha)[s]\downarrow=0$. If not, $\alpha$ takes no action at stage $s$.
If $\Phi_e^{\widehat{D_p}}(x_\alpha)[s]\downarrow=0$, then $\alpha$ enumerates $x_\alpha$ into $A_p$ and preserves this computation by restraining $\widehat{D_p}\restriction(\use(\Phi_e^{\widehat{D_p}}(x_\alpha))+1)$.

Notice that if we never see that $\Phi_e^{\widehat{D_p}}(x_\alpha)\downarrow=0$, then either $\Phi_e^{\widehat{D_p}}(x_\alpha)\uparrow$ or $\Phi_e^{\widehat{D_p}}(x_\alpha)\downarrow\neq 0$, and in either case, the value of $\Phi_e^{\widehat{D_p}}(x_\alpha)$ will not be equal to $A_p(x_\alpha)=0$ and so we meet the $N_e^p$ requirement. Otherwise, at the first stage $s$ for which $\Phi_e^{\widehat{D_p}}(x_\alpha)[s]\downarrow=0$, we enumerate $x_\alpha$ into $A_p$ and restrain $\widehat{D_p}$ below $\use(\Phi_e^{\widehat{D_p}}(x_\alpha))+1$. In this case we have that $\Phi_e^{\widehat{D_p}}(x_\alpha)\downarrow=0\neq 1=A_p(x_\alpha)$, and so we satisfy $N_e^p$.

\subsection{Making $\G$ computably categorical}\label{informal S strategy}
We will build $\G$ in stages. At stage $s=0$, we set $\G=\emptyset$. Then, at stage $s>0$, we add two new connected components to $\G[s]$ by adding the root nodes $a_{2s}$ and $a_{2s+1}$ for those components, and attaching to each node a $2$-loop (a cycle of length $2$). We then attach a $(5s+1)$-loop to $a_{2s}$ and a $(5s+2)$-loop to $a_{2s+1}$. This gives us the configuration of loops:
    \begin{align*}
        a_{2s} &: 2, 5s+1  \\
        a_{2s+1} &: 2, 5s+2.
    \end{align*}
The connected component consisting of the root node $a_{2s}$ with its attached loops will be referred to as the $\mathbf{2}s$\textbf{th connected component} of $\G$. During the construction, we might add more loops to connected components of $\G$, which causes them to have the following configuration:
    \begin{align*}
        a_{2s} &: 2, 5s+1, 5s+3  \\
        a_{2s+1} &: 2, 5s+1, 5s+4
    \end{align*}
We might also later add more loops to this configuration to obtain the following configuration:
    \begin{align*}
        a_{2s} &: 2, 5s+1, 5s+2, 5s+3  \\
        a_{2s+1} &: 2, 5s+1, 5s+2, 5s+4.
    \end{align*}

The idea behind adding these loops is to uniquely identify each connected component of $\G$. In all configurations above, there is only one way to match the components in $\G$ with components in a computable graph in order to define an embedding.

To make $\G$ computably categorical, we attempt to build an embedding of $\G$ into each computable directed graph. For each index $e$, let $\mathcal{M}_e$ be the (partial) computable graph with domain $\omega$ such that $E(x,y)\iff\Phi_e(x,y)=1$ and $\neg E(x,y)\iff\Phi_e(x,y)=0$. If $\Phi_e$ is not total, then $\mathcal{M}_e$ will not be a computable graph, but we will attempt to embed $\G$ into $\mathcal{M}_e$ anyway since we cannot know whether $\Phi_e$ is total or not. So we have the following requirement for each $e\in\omega$.
    \[
    S_e : \text{if $\G\cong\mathcal{M}_e$, then there exists a computable isomorphism $f_e:\G\to\mathcal{M}_e$}
    \]
To satisfy each $S_e$ requirement in isolation, we have the following strategy. Let $\alpha$ be an $S_e$-strategy. When $\alpha$ is first eligible to act, it sets its parameter $n_\alpha=0$ and defines its current map $f_\alpha$ from $\G$ into $\mathcal{M}_e$ to be empty. For the rest of this description, let $n=n_\alpha$. This parameter will keep track of the connected components that $\alpha$ is attempting to match between $\G$ and $\mathcal{M}_e$, and will be incremented by $1$ only when we find copies of the $2n$th and $(2n+1)$st connected components of $\G$. Suppose the map $f_\alpha[s-1]$ matches up the $2m$th and $(2m+1)$st components of $\G[s-1]$ and $\mathcal{M}_e[s-1]$ for all $m<n$. At future stages, $\alpha$ checks whether $\mathcal{M}_e[s]$ contains isomorphic copies of the $2n$th and $(2n+1)$st components in $\G[s]$. If not, $\alpha$ takes no additional action at stage $s$ and retains the parameter $n$ and the map $f_\alpha$. If $\mathcal{M}_e[s]$ contains copies of these components, then $\alpha$ extends the map by matching those components in $\G[s]$ and $\mathcal{M}_e[s]$, and it increments the value of $n$ by $1$. Since $\G$ and $\mathcal{M}_e$ are computable graphs, $f_\alpha$ will be correct on these components if $\G\cong\mathcal{M}_e$. When this isolated $S_e$-strategy is enacted along with the other strategies for other types of requirements, it may be the case that the initial definition of $f_\alpha$ on a pair of components is not total on all cycles attached to each root node, but if $\G\cong\mathcal{M}_e$, we will have a systematic way of extending $f_\alpha$ on any new cycles that were added (see Section \ref{section: interactions between multiple strategies}).

If $\alpha$ finds copies of the $2n$th and $(2n+1)$st components of $\G$ for every $n$, then $f_\alpha$ will be a computable embedding of $\G$ into $\mathcal{M}_e$. Because of the form of $\G$, if $\mathcal{M}_e\cong\G$, then $f_\alpha$ will be a partial embedding which can be extended computably to a computable embedding on all of $\G$, satisfying the $S_e$ requirement. Otherwise, there exists some $n$ such that the $2n$th and $(2n+1)$st components of $\G$ were never matched, and so $\G$ and $\mathcal{M}_e$ cannot be isomorphic and so we trivially satisfy $S_e$.

\subsection{Being computably categorical relative to a degree}\label{informal T strategy}
In this construction, we want to define computations using a $D_p$-oracle that can be destroyed later by enumerating numbers into $A_p$. We achieve this by setting the use of the $D_p$-computation to be $\<u,p\>$. Enumerating $u$ into $A_p$ causes $\<u,p\>$ to enter $D_p$, destroying the associated computation. 

For each $p\in P_0$, we ensure $\G$ is computably categorical relative to $D_p$. Let $\mathcal{M}_i^{D_p}$ be the (partial) $D_p$-computable directed graph with domain $\omega$ and edge relation given by $\Phi_i^{D_p}$. Since each $D_p$ is c.e., we define the following terms to keep track of certain finite subgraphs which appear and remain throughout our construction.

\begin{definition}
    Let $C_0$ and $C_1$ be isomorphic finite distinct subgraphs of $\mathcal{M}_i^{D_p}[s]$. The \textbf{age of} $\mathbf{C_0}$ is the least stage $t\leq s$ such that all edges in $C_0$ appear in $\mathcal{M}_i^{D_p}[t]$, denoted by $\age(C_0)$. We say that $\mathbf{C_0}$ is \textbf{older than} $\mathbf{C_1}$ when $\age(C_0)\leq\age(C_1)$.

    We say that $C_0$ is the \textbf{oldest} if for all finite distinct subgraphs $C\cong C_0$ of $\mathcal{M}_i^{D_p}[s]$, $\age(C_0)\leq\age(C)$.
\end{definition}

\begin{definition}
    Let $C_0=\<a_0,a_1,\dots,a_k\>$ and $C_1=\<b_0,b_1,\dots,b_k\>$ be isomorphic finite distinct subgraphs of $\mathcal{M}_i^{D_p}[s]$ with $a_0<a_1<\dots<a_k$ and $b_0<b_1<\dots<b_k$. We say that $C_0<_{\text{lex}} C_1$ if for the least $j$ such that $a_j\neq b_j$, $a_j<b_j$.

    We say that $C_0$ is the \textbf{lexicographically least} if for all finite distinct subgraphs $C\cong C_0$ of $\mathcal{M}_i^{D_p}[s]$, $C_0<_{\text{lex}} C$.
\end{definition}

If $\G\cong\mathcal{M}_i^{D_p}$, then we need to build a $D_p$-computable isomorphism between these graphs. To achieve this, we meet the following requirement for each $i\in\omega$.
    \[
    T_i^p : \text{if $\G\cong\mathcal{M}_i^{D_p}$, then there exists a $D_p$-computable isomorphism $g_i^{D_p}:\G\to\mathcal{M}_i^{D_p}$}
    \]

The strategy to satisfy each $T_i^p$ requirement in isolation is similar to the $S_e$-strategy, with some additional changes. Since the graphs are $D_p$-computable, embeddings defined by a $T_i^p$-strategy may become undefined later when small numbers enter $D_p$. Enumerations into $D_p$ also cause changes in $\mathcal{M}_i^{D_p}$ such as disappearing edges. We will show in the verification that if $\G\cong\mathcal{M}_i^{D_p}$, ``true'' copies of components from $\G$ will eventually appear and remain in $\mathcal{M}_i^{D_p}$ (and thus become the oldest finite subgraph which is isomorphic to a component in $\G$), and so our $T_i^p$-strategy below will be able to define the correct $D_p$-computable isomorphism between the two graphs. 

Let $\alpha$ be a $T_i^p$-strategy. When $\alpha$ is first eligible to act, it sets its parameter $n_\alpha=0$ and defines $g_\alpha^{D_p}$ to be the empty map. Once $\alpha$ has defined $n_\alpha$, then at the previous stage $s-1$ (or the last $\alpha$-stage in the full construction), we have the following situation: 
\begin{itemize}
    \item For each $m<n_\alpha$, $g_\alpha^{D_p}[s-1]$ maps the $2m$th and $(2m+1)$st components of $\G[s-1]$ to isomorphic copies in $\mathcal{M}_i^{D_p}[s-1]$.
    \item For $m<n_\alpha$, let $l_m$ be the maximum $\Phi_i^{D_p}[s-1]$-use for the loops in the copies in $\mathcal{M}_i^{D_p}[s-1]$ for the $2m$th and $(2m+1)$st components in $\G$. We can assume that if $m_0<m_1<n_\alpha$, then $l_{m_0}<l_{m_1}$.
    \item For $m<n_\alpha$, let $\<u_m,p\>$ be the $g_\alpha^{D_p}[s-1]$-use for the mapping of the $2m$th and $(2m+1)$st components of $\G$. This use will be constant for all elements in these components.
    \item By construction, we will have that $l_m<u_k\leq \<u_k,p\>$ for all $m\leq k<n_\alpha$.
\end{itemize}

Suppose $\alpha$ is acting at stage $s$ and has already defined $n_\alpha$. We first check whether numbers have been enumerated into $D_p$ that injure the loops in $\mathcal{M}_i^{D_p}[s-1]$ given by $\Phi_i^{D_p}[s-1]$; that is, if there exists a number $x<\use(\Phi_i^{D_p}[s-1])$ which entered $D_p$. If not, then we keep the current value of $n_\alpha$ and skip ahead to the next step. If so, then let $k$ be the least such that some number $x\leq l_k$ was enumerated into $D_p$. The loops in $\mathcal{M}_i^{D_p}[s]$ in the copies of the $2k$th and $(2k+1)$st components of $\G$ have been injured, and so may have disappeared. We have that $x\leq \<u_m,p\>$ for all $k\leq m<n_\alpha$, and so our map $g_\alpha^{D_p}[s]$ is now undefined on all the $2m$th and $(2m+1)$st components for $k\leq m<n_\alpha$. So, $\alpha$ redefines $n_\alpha=k$ to find new images for the $2k$th and $(2k+1)$st components in $\G[s]$.

Second, we check to see if there is a $j<k$ and a new $x\in D_p$ such that $l_j<x<\<u_j,p\>$. By minimality of the value $k$ above, we know that $l_j<x$ for all $j<k$ and $x\in D_p[s]\setminus D_p[s-1]$. For each such $j$, our map $g_\alpha^{D_p}[s-1]$ has been injured on the $2j$th and $(2j+1)$st components of $\G$, but the loops in the copies of those components in $\mathcal{M}_i^{D_p}[s]$ remain intact. Therefore, we define $g_\alpha^{D_p}[s]$ on these components with oracle $D_p[s]$ to be equal to $g_\alpha^{D_p}[s-1]$. Furthermore, we keep the same use for $g_\alpha^{D_p}[s]$ on these components. This will ensure that injury of this type happens only finitely often.

Third, we check whether we can extend $g_\alpha^{D_p}[s-1]$ to the $2n_\alpha$th and $(2n_\alpha+1)$st components of $\G[s]$. Search for isomorphic copies in $\mathcal{M}_i^{D_p}[s]$ of these components. If there are multiple copies in $\mathcal{M}_i^{D_p}[s]$, choose the oldest such copy to map to, and if there are multiple equally old copies, choose the lexicographically least oldest copy. If there are no copies in $\mathcal{M}_i^{D_p}[s]$, then keep the value of $n_\alpha$ the same and $g_\alpha^{D_p}$ unchanged and let the next requirement act. Otherwise, extend $g_\alpha^{D_p}[s-1]$ to $g_\alpha^{D_p}[s]$ to include the $2n_\alpha$th and $(2n_\alpha+1)$st components of $\G$ and set the use to be $\<u_{n_\alpha},p\>$ where $u_{n_\alpha}$ is large (and so $u_{n_\alpha}>l_k$ for all $k\leq n_\alpha)$. Increment $n_\alpha$ by $1$ and go to the next requirement.
    
If $\G\cong\mathcal{M}_i^{D_p}$, then for each $n$, eventually the real copies of the $2n$th and $(2n+1)$st components of $\G$ will appear and remain forever in $\mathcal{M}_i^{D_p}$. Moreover, they will eventually be the oldest and lexicographically least copies in $\mathcal{M}_i^{D_p}$. Let $l_n$ be the maximum true $D_p$-use on the edges in the loops in these $\mathcal{M}_i^{D_p}$ components. At this point, we will define $g_\alpha^{D_p}$ correctly on these components with a large use $\<u_n,p\>$. Since $l_n<\<u_n,p\>$, at most finitely many numbers enter $D_p$ from the interval $(l_n,\<u_n,p\>]$, but each time this happens, we define our map $g_\alpha^{D_p}$ to remain the same with the same use on the new oracle. Therefore, eventually our map $g_\alpha^{D_p}$ is never injured again on the $2n$th and $(2n+1)$st components. It follows that if $G\cong\mathcal{M}_i^{D_p}$, then $g_\alpha^{D_p}$ will be an embedding of $\G$ into $\mathcal{M}_i^{D_p}$ which will be an isomorphism by the structure of $\G$. 

\subsection{Being not computably categorical relative to a degree}\label{informal R strategy}
Finally, for each $q\in P_1$ we want to make $\G$ not computably categorical relative to the c.e.\ set $D_q$. To achieve this, we build a $D_q$-computable copy $\mathcal{B}_q$ of $\G$ such that for all $e\in\omega$, the $D_q$-computable map $\Phi_e^{D_q}:\G\to\B_q$ is not an isomorphism. The graph $\B_q$ will be built globally.

Similarly to $\G$, we build the directed graph $\B_q$ in stages. At stage $s=0$, we set $\B_q=\emptyset$. At stage $s>0$, we add root nodes $b_{2s}^q$ and $b_{2s+1}^q$ to $\B_q$ and attach to each one a $2$-loop. Next, we attach a $(5s+1)$-loop to $b_{2s}^q$ and a $(5s+2)$-loop to $b_{2s+1}^q$ with $D_q$-use $s$. However, throughout the construction, we may change the position of loops or add new loops to specific components of $\B_q$ depending on enumerations into $A_q$ (and thus into $D_q$). For the $2s$th and $(2s+1)$st components of $\B_q$, we have three possible final configurations of the loops. If we never start the process of diagonalizing using these components, then they will remain the same forever:
    \begin{align*}
        b_{2s}^q : 2, 5s+1  \\
        b_{2s+1}^q : 2, 5s+2. 
    \end{align*}
If we start, but don't finish, diagonalizing using these components, they will end in the following configuration:
    \begin{align*}
        b_{2s}^q : 2, 5s+1, 5s+3  \\
        b_{2s+1}^q : 2, 5s+1, 5s+4 
    \end{align*}
If we complete a diagonalization with these components, then they will end as:
    \begin{align*}
        b_{2s}^q : 2, 5s+1, 5s+2, 5s+4  \\
        b_{2s+1}^q : 2, 5s+1, 5s+2, 5s+3. 
    \end{align*}

For all $e\in\omega$, we meet the requirement
    \[
    R_e^q : \Phi_e^{D_q}:\G\to\mathcal{B}_q \ \text{is not an isomorphism}.
    \]
To satisfy this requirement, we will diagonalize against $\Phi_e^{D_q}$. Let $\alpha$ be an $R_e^q$-strategy.

When $\alpha$ is first eligible to act, it picks a large number $n_\alpha$, and for the rest of this strategy, let $n=n_\alpha$. This parameter indicates which connected components of $\B_q$ will be used in the diagonalization. At future stages, $\alpha$ checks if $\Phi_e^{D_q}$ maps the $2n$th and $(2n+1)$st connected component of $\G$ to the $2n$th and $(2n+1)$st connected component of $\B_q$, respectively. If not, $\alpha$ does not take any action. If $\alpha$ sees such a computation, it defines $m_\alpha$ to be the max of the uses of the computations on each component and restrains $D_q\restriction m_\alpha+1$.

At this point, our connected components in $\G[s]$ and $\B_q[s]$ are as follows:
    \begin{align*}
        a_{2n} : 2, 5n+1 & \ \ \ \ b_{2n}^q : 2, 5n+1 \\
        a_{2n+1} : 2, 5n+2 & \ \ \ \ b_{2n+1}^q : 2, 5n+2.
    \end{align*}
Since $\Phi_e^{D_q}$ looks like a potential isomorphism between $\G$ and $\B_q$, $\alpha$ will now take action to eventually force the true isomorphism to match $a_{2n}$ with $b^q_{2n+1}$ and to match $a_{2n+1}$ with $b^q_{2n}$ while preventing $\Phi_e^{D_q}$ from correcting itself on these components. Furthermore, $\alpha$ must do this in a way that will allow other requirements to succeed.

After $m_\alpha$ has been defined, $\alpha$ adds a $(5n+3)$-loop to $a_{2n}$ and a $(5n+4)$-loop to $a_{2n+1}$ in $\G[s]$. It also attaches a $(5n+3)$-loop to $b_{2n}^q$ and a $(5n+4)$-loop to $b_{2n+1}^q$ in $\B_q$[s]. Let $v_\alpha$ be a large unused number and set the use of all edges in these new loops appearing in $\B_q[s]$ to be $\<v_\alpha,q\>$. Notice that $\<v_\alpha,q\>>m_\alpha$ and that enumerating $v_\alpha$ into $A_q$ will put $\<v_\alpha,q\>$ into $D_q$, removing the $(5n+3)$- and $(5n+4)$-loops from $\B_q$ but not the $(5n+1)$- or $(5n+2)$-loops. Our connected components in $\G[s]$ and in $\B_q[s]$ are now:
    \begin{align*}
        a_{2n} : 2, 5n+1, 5n+3 & \ \ \ \ b_{2n}^q : 2, 5n+1, 5n+3 \\
        a_{2n+1} : 2, 5n+2, 5n+4 & \ \ \ \ b_{2n+1}^q : 2, 5n+2, 5n+4.
    \end{align*}

After adding the $(5n+3)$- and $(5n+4)$-loops to both graphs, $\alpha$ must now wait for higher priority strategies which have already defined their maps on the $2m$th and $(2m+1)$st components for all $m\leq n$ to recover their maps before taking the last step. If $\beta$ is a higher priority $S$ or $T^p$ strategy for which $f_\beta$ or $g_\beta^{D_p}$ is already defined on the $2n$th and $(2n+1)$st components of $\G$, then before completing its diagonalization, $\alpha$ must wait for $\beta$ to extend $f_\beta$ or $g_\beta^{D_p}$ to be defined on the new $(5n+3)$- and $(5n+4)$-loops. We will refer to this action as $\alpha$ issuing a challenge to all higher priority $S$ and $T$ requirements. Moreover, $\alpha$ issues a challenge to all higher priority $T$ requirements regardless of whether $p<q$, $q<p$, or if $p$ and $q$ are incomparable in $P$.

Once all higher priority strategies recover, $\alpha$ enumerates $v_\alpha$ into $A_q$ (and thus $\<v_\alpha,q\>$ goes into $D_q$). Doing this causes the $(5n+3)$-loop attached to $b_{2n}^q$ and the $(5n+4)$-loop attached to $b_{2n+1}^q$ to disappear in $\B_q[s]$. We now attach a $(5n+4)$-loop to $b_{2n}^q$ and a $(5n+3)$-loop to $b_{2n+1}^q$. We also attach a $(5n+1)$-loop to $a_{2n+1}$ and $b_{2n+1}^q$ and a $(5n+2)$-loop to $a_{2n}$ and $b_{2n}^q$, and we will refer to this process as homogenizing the components in $\G$ and in $\B_q$. The final configuration of our loops is:
    \begin{align*}
        a_{2n} : 2, 5n+1, 5n+2, 5n+3 & \ \ \ \ b_{2n}^q : 2, 5n+1, 5n+2, 5n+4 \\
        a_{2n+1} : 2, 5n+1, 5n+2, 5n+4 & \ \ \ \ b_{2n+1}^q : 2, 5n+1, 5n+2, 5n+3.
    \end{align*}

By homogenizing the components, we ensured that when we added loops for the diagonalization in $\B_q$, we also made adjustments in $\G$ to keep the components isomorphic to each other. Additionally, because $v_\alpha>m_\alpha$, the values $\Phi_e^{D_q}(a_{2n})=b_{2n}$ and $\Phi_e^{D_q}(a_{2n+1})=b_{2n+1}$ remain. So if $\Phi_e^{D_q}[s]$ is extended to a map on the entirety of $\G$, it cannot be a $D_q$-computable isomorphism, and so $R_e^q$ is satisfied. If we meet $R_e^q$ for all $e\in\omega$, we have that $\G$ is not computably categorical relative to $D_q$ with $\B_q$ being the witness.

\subsection{Interactions between multiple strategies}\label{section: interactions between multiple strategies}
There are some interactions which can cause problems between these strategies. We will explain how the strategies described in this section, with some tweaks, can solve these issues.

We first point out that the independence requirements cause no serious issues for the other requirements. An $N_e^p$-strategy $\alpha$, when it is first eligible to act, will pick a large unused number $x_\alpha$, and so if it ever enumerates $x_\alpha$ into $A_p$, it will not violate any restraints placed by higher priority independence or $R_e^q$ requirements. If this enumeration injures loops in or embeddings defined on components of $\mathcal{M}_i^{D_r}$ for some higher priority $T_i^r$-strategy where $p\leq r$, the $T_i^r$-strategy will be able to check for this $D_r$ change when it is next eligible to act and will be able to react accordingly to succeed.

The next main interaction to note is between an $R_e^q$-strategy $\beta$ and an $S$- or $T$-strategy $\alpha$ where $\alpha^\frown\<\infty\>\subseteq\beta$. In the tree of strategies, $\alpha^\frown\<\infty\>\subseteq\beta$ indicates that $\beta$ guesses $\alpha$ will define an embedding of $\G$ into its graph $\mathcal{M}_i$ or $\mathcal{M}_i^{D_p}$. In the informal description for $\beta$, we had $\beta$ wait for higher priority strategies to recover their embeddings defined on $\G$ after we added $(5n+3)$- and $(5n+4)$-loops to components of $\G$. When $\beta$ adds the new loops, it updates $\alpha$'s parameter by setting $n_\alpha=n_\beta$ if $\alpha$ is an $S$-strategy. If $\alpha$ is instead a $T$-strategy, then $\beta$ updates $n_\alpha$ to be the least $m\leq n_\beta$ such that $g_\alpha^{D_p}$ is no longer fully defined on the $2m$th and $(2m+1)$st components of $\G$ (for reasons that will become clear below). In either case, this causes $\alpha$ to return to previous components of $\G$ to find copies of them in either $\mathcal{M}_e$ or $\mathcal{M}_i^{D_p}$. If it is the case that either $\G\cong\mathcal{M}_e$ or $\G\cong\mathcal{M}_i^{D_p}$, $\alpha$ will eventually find copies and is able to define either a computable or $D_p$-computable isomorphism between the two graphs. Hence, the only tweaks needed for the $S_e$- and $T_i^p$-strategies $\alpha$ are steps in which they check if there is a lower priority $R_e^q$-strategy $\beta$ where $\alpha^\frown\<\infty\>\subseteq\beta$ that has issued its challenge after adding the new initial loops in $\G$. 

There is a related technical point concerning $R_e^q$ homogenizing the $2n_\beta$th and $(2n_\beta+1)$st components in the last step of its diagonalization. We do not ask the higher priority $S$- and $T$-strategies $\alpha$ with $\alpha^\frown\<\infty\>\subseteq\beta$ to go back and match these final homogenizing loops. Instead, we will extend $f_\alpha$ (or $g_\alpha^{D_p}$) to those homogenizing loops in a computable (or $D_p$-computable) way for $\alpha$ on the true path in the verification.

One last interaction arises between an $R_e^q$-strategy $\beta$ and a $T_i^p$-strategy $\alpha$ where $\alpha^\frown\<\infty\>\subseteq\beta$ and $q<p$ in $P$. With the current construction, we could have the following situation which makes it impossible for $\alpha$ to succeed. Suppose $\alpha$ finds copies at a stage $s_0$ of the $2n_\beta$th and $(2n_\beta+1)$st components of $\G[s_0]$ into $\mathcal{M}_i^{D_p}[s_0]$, and we have the following components in both graphs
    \begin{align*}
        a_{2n_\beta} : 2, 5n_\beta+1 & \ \ \ \ c : 2, 5n_\beta+1 \\
        a_{2n_\beta+1} : 2, 5n_\beta+2 & \ \ \ \ d : 2, 5n_\beta+2
    \end{align*}
where $c$ and $d$ are the root nodes of the copies of the $\G$ components found in $\mathcal{M}_i^{D_p}[s_0]$. At this point, $\alpha$ defines $g_\alpha^{D_p}[s_0]$ by mapping $a_{2n_\beta}\mapsto c$ and $a_{2n_\beta+1}\mapsto d$ with a use $\<u_{\alpha,n_\beta},p\>$. 

Suppose at a later stage $s_1>s_0$, $\beta$ adds new loops to the corresponding components in $\G$ with a $D_q$-use of $\<v_\alpha,q\>$ for $v_\alpha$ large and issues its challenge, and so our components in $\G[s_1]$ and $\mathcal{M}_i^{D_p}[s_1]$ are
    \begin{align*}
        a_{2n_\beta} : 2, 5n_\beta+1, 5n_\beta+3 & \ \ \ \ c : 2, 5n_\beta+1 \\
        a_{2n_\beta+1} : 2, 5n_\beta+2, 5n_\beta+4 & \ \ \ \ d : 2, 5n_\beta+2.
    \end{align*}
Then, suppose at stage $s_2>s_1$ that $\Phi_i^{D_p}[s_1]$ adds the new loops correctly to $c$ and $d$:
    \begin{align*}
        a_{2n_\beta} : 2, 5n_\beta+1, 5n_\beta+3 & \ \ \ \ c : 2, 5n_\beta+1, 5n_\beta+3 \\
        a_{2n_\beta+1} : 2, 5n_\beta+2, 5n_\beta+4 & \ \ \ \ d : 2, 5n_\beta+2, 5n_\beta+4.
    \end{align*}
Let $z_{\alpha,n_\beta}$ be the minimum use for any of the new edges in these loops, and assume that $\<v_\alpha,q\><z_{\alpha,n_\beta}$. The strategy $\alpha$ extends $g_\alpha^{D_p}[s_1]$ to map the $(5n_\beta+3)$- and $(5n_\beta+4)$-loops from $\G$ into $\mathcal{M}_i^{D_p}$ with a large use (i.e., greater than $\<u_{\alpha,n_\beta},p\>$). $\alpha$ has now met its challenge and takes the $\infty$ outcome.

Finally, suppose at a stage $s_3>s_2$, $\beta$ is eligible to act again. $\beta$ enumerates $v_\alpha$ into $A_q$, which enumerates $\<v_\alpha,q\>$ into $D_q$ and $D_p$ since $q<p$. Since $\<v_\alpha,q\>\in D_q$, the $(5n_\beta+3)$- and $(5n_\beta+4)$-loops in $\B_q$ disappear, and so $\beta$ can homogenize the $2n_\beta$th and $(2n_\beta+1)$st components in $\G$ and in $\B_q$ and diagonalize.
    \begin{align*}
        a_{2n_\beta} : 2, 5n_\beta+1, 5n_\beta+2, 5n_\beta+3 & \ \ \ \ b_{2n_\beta}^q : 2, 5n_\beta+1, 5n_\beta+2, 5n_\beta+4 \\
        a_{2n_\beta+1} : 2, 5n_\beta+1, 5n_\beta+2, 5n_\beta+4 & \ \ \ \ b_{2n_\beta+1}^q : 2, 5n_\beta+1, 5n_\beta+2, 5n_\beta+3.
    \end{align*}
The map $\Phi_e^{D_q}[s_3]$ still maps $a_{2n_\beta}$ to $b_{2n_\beta}^q$ and $a_{2n_\beta+1}$ to $b_{2n_\beta+1}^q$ since the $D_q$-use for these computations is less than $v_\alpha$ and thus less than $\<v_\alpha,q\>$.

However, because $\<v_\alpha,q\>$ has gone into $D_p$ as well and $\<v_\alpha,q\><z_{\alpha,n_\beta}$, the $(5n_\beta+3)$- and $(5n_\beta+4)$-loops in $\mathcal{M}_i^{D_p}[s_3]$ also disappear. But $v_\alpha$ was chosen to be large at stage $s_1$, so $v_\alpha>u_{\alpha,n_\beta}$ and so $g_\alpha^{D_p}[s_3]$ still maps $a_{2n_\beta}$ to $c$ and $a_{2n_\beta+1}$ to $d$. This allows the opponent controlling $\mathcal{M}_i^{D_q}$ to add loops in the following way to diagonalize:
    \begin{align*}
        a_{2n_\beta} : 2, 5n_\beta+1, 5n_\beta+2, 5n_\beta+3 & \ \ \ \ c : 2, 5n_\beta+1, 5n_\beta+2, 5n_\beta+4 \\
        a_{2n_\beta+1} : 2, 5n_\beta+1, 5n_\beta+2, 5n_\beta+4 & \ \ \ \ d : 2, 5n_\beta+1, 5n_\beta+2, 5n_\beta+3.
    \end{align*}
This now makes it impossible for $\alpha$ to succeed if $\G\cong\mathcal{M}_i^{D_p}$. 

To solve this conflict, $\alpha$ needs to lift the use of $g_\alpha^{D_p}[s_1]$ when $\beta$ starts its diagonalization process. Specifically, when $\beta$ adds the $(5n_\beta+3)$- and $(5n_\beta+4)$-loops to $\B_q$ and sets their $D_q$-use to be $\<v_\alpha,q\>$, we enumerate $u_{\alpha,n_\beta}$ into $A_p$, which puts $\<u_{\alpha,n_\beta},p\>$ into $D_p$ but nothing into $D_q$. This action makes $g_\alpha^{D_p}$ undefined on the $2n_\beta$th and $(2n_\beta+1)$st components of $\G$. When $\alpha$ is next eligible to act, it will redefine $g_\alpha^{D_p}$ on the $2n_\beta$th and $(2n_\beta+1)$st components of $\G$ with a large use greater than $\<v_\alpha,q\>$. Therefore, if $\beta$ later enumerates $v_\alpha$ into $A_q$ to diagonalize, the map $g_\alpha^{D_p}$ will become undefined on the entirety of the $2n_\beta$th and $(2n_\beta+1)$st components, preventing the opponent from using $\mathcal{M}_i^{D_p}$ to diagonalize against $\alpha$.

It is possible that there is more than one $T$-strategy $\alpha$ with $\alpha^\frown\<\infty\>\subseteq\beta$ associated with elements in $P$ greater than $q$. In this case, $\beta$ has to enumerate the use $u_{\alpha,n_\beta}$ for each such $\alpha$ into $A_q$. These elements may cause $g_\alpha^{D_p}$ to become undefined on the $2m$th and $(2m+1)$st components for $m<n_\beta$, or for these components in $\mathcal{M}_\alpha^{D_p}$ to disappear. Therefore, for a $T_i^p$-strategy $\alpha$ with $\alpha^\frown\<\infty\>\subseteq\beta$ and $q<p$, $\beta$ resets $n_\alpha$ to be the least $m\leq n_\beta$ such that $g_\alpha^{D_p}$ no longer matches the $2m$th and $(2m+1)$st components in $\G$ and $\mathcal{M}_i^{D_p}$.

\section{Proof of Theorem \ref{main theorem}}\label{section 3}

In this section, we prove Theorem \ref{main theorem}. Fix a computable partially ordered set $P=(P,\leq)$, and  let $P=P_0\sqcup P_1$ be a computable partition of $P$.

We build our computable directed graph $\G$ stage by stage as outlined in section \ref{informal S strategy}, and for each $q\in P_1$, we build an isomorphic copy $\B_q$ of $\G$ as outlined in section \ref{informal R strategy}.
    
We will also build a uniformly c.e.\ family of independent sets $A_p$ for $p\in P$ via a priority argument on a tree of strategies. We define
    \[
    D_p = \bigoplus_{q\leq p} A_q
    \]
and
    \[
    \widehat{D_p}=\bigoplus_{q\neq p} A_q.
    \]

Our embedding $h$ of $P$ into the c.e.\ degrees is the map $h(p)=D_p$ for all $p\in P$.

\subsection{Requirements}
Recall our four types of requirements for our construction:
    \[
    N_e^p : \Phi_e^{\widehat{D_p}} \neq A_p
    \]
    \[
    S_e : \text{if $\G\cong\mathcal{M}_e$, then there exists a computable isomorphism $f_e:\G\to\mathcal{M}_e$}
    \]
    \[
    T_i^p : \text{if $\G\cong\mathcal{M}_i^{D_p}$, then there exists a $D_p$-computable isomorphism $g_i^{D_p}:\G\to\mathcal{M}_i^{D_p}$}
    \]
    \[
    R_e^q : \Phi_e^{D_q}:\G\to\mathcal{B}_q \ \text{is not an isomorphism}
    \]

\subsection{Construction}
Let $\Lambda=\{\infty<_\Lambda\dots<_\Lambda w_2<_\Lambda s<_\Lambda w_1<_\Lambda w_0\}$ be the set of outcomes, and let $T=\Lambda^{<\omega}$ be our tree of strategies. The construction will be performed in $\omega$ many stages $s$.

We define the \textbf{current true path} $\pi_s$, the longest strategy eligible to act at stage $s$, inductively. For every $s$, $\lambda$, the empty string, is eligible to act at stage $s$. Suppose the strategy $\alpha$ is eligible to act at stage $s$. If $|\alpha|<s$, then follow the action of $\alpha$ to choose a successor $\alpha^\frown\<o\>$ on the current true path. If $|\alpha|=s$, then set $\pi_s=\alpha$. For all strategies $\beta$ such that $\pi_s <_L\beta$, initialize $\beta$ (i.e., set all parameters associated to $\beta$ to be undefined). If $\beta <_L \pi_s$ and $|\beta|<s$, then $\beta$ retains the same values for its parameters.

We will now give formal descriptions of each strategy and their outcomes in the construction.

\subsection{$N_e^p$-strategies and outcomes}
We first cover the $N_e^p$-strategies used to make each c.e.\ set $A_p$ independent. Let $\alpha$ be an $N_e^p$-strategy eligible to act at stage $s$.

\textbf{Case $\mathbf{1}$}: If $\alpha$ is acting for the first time at stage $s$ or has been initialized since the last $\alpha$-stage, define its parameter $x_\alpha$ to be large, and take outcome $w_0$.

\textbf{Case $\mathbf{2}$}: If $x_\alpha$ is already defined and $\alpha$ took outcome $w_0$ at the last $\alpha$-stage, check if 
    \[
    \Phi_e^{\widehat{D_p}}(x_\alpha)[s]\downarrow=0.
    \]
If not, take the $w_0$ outcome. If $\Phi_e^{\widehat{D_p}}(x_\alpha)[s]\downarrow=0$, enumerate $x_\alpha$ into $A_p$ and take the $s$ outcome which will preserve $\widehat{D_p}\restriction(\text{use}(\Phi_e^{\widehat{D_p}}(x_\alpha)[s])+1)$.

\textbf{Case $\mathbf{3}$}: If $\alpha$ took the $s$ outcome the last time it was eligible to act and has not been initialized, take the $s$ outcome again.

\subsection{$S_e$-strategies and outcomes}

We now detail our $S_e$-strategy to make $\G$ computably categorical. Let $\alpha$ be an $S_e$-strategy eligible to act at stage $s$.

\textbf{Case $\mathbf{1}$}: If $\alpha$ is acting for the first time or has been initialized since the last $\alpha$-stage, define $n_\alpha=0$ and $f_\alpha[s]$ to be the empty map. Take the $w_0$ outcome.

\textbf{Case $\mathbf{2}$}: If $\alpha$ has defined $n_\alpha$ and is currently challenged by an $R_e^q$-strategy $\beta$ with $\alpha^\frown\<\infty\>\subseteq\beta$, then $\alpha$ acts as follows. In the verification, we show that there can only be one strategy challenging $\alpha$ at a time. When $\beta$ challenged $\alpha$, if the value of $n_\alpha$ was greater than $n_\beta$, then $\beta$ redefined $n_\alpha$ to be equal to $n_\beta$. Therefore, we currently have $n_\alpha\leq n_\beta$. Furthermore, if $n_\alpha=n_\beta$, then $f_\alpha$ may already be defined on the $(5n_\alpha+1)$- and $(5n_\alpha+2)$-loops in the $2n_\alpha$th and $(2n_\alpha+1)$st components of $\G$. 

$\alpha$ searches for copies of the $2n_\alpha$th and $(2n_\alpha+1)$st components of $\G$ in $\mathcal{M}_e[s]$. More formally, if $f_\alpha$ is not defined on any loops in these components, then $\alpha$ searches for full copies of both components in $\mathcal{M}_e[s]$. If $n_\alpha=n_\beta$ and $f_\alpha$ is already defined on the $(5n_\alpha+1)$- and $(5n_\alpha+2)$-loops in $\G$, then $\alpha$ searches for the new loops of lengths $5n_\alpha+3$ and $5n_\alpha+4$ in the matched components in $\mathcal{M}_e[s]$. If no copies are found, set $f_\alpha[s]=f_\alpha[s-1]$, leave $n_\alpha$ unchanged, and take the $w_{n_\alpha}$ outcome. If copies are found, extend $f_\alpha[s-1]$ to $f_\alpha[s]$ by matching the components, increment $n_\alpha$ by $1$ and check if $n_\alpha>n_\beta$ for this new $n_\alpha$. If yes, take the $\infty$ outcome and declare $\beta$'s challenge to have been met. If not, take the $w_{n_\alpha}$ outcome and let $\beta$'s challenge remain active.

\textbf{Case $\mathbf{3}$}: If $\alpha$ has defined its parameter $n_\alpha$ and $\alpha$ is not currently challenged, then, $\alpha$ continues to search for copies of the $2n_\alpha$th and $(2n_\alpha+1)$st components of $\G$ in $\mathcal{M}_e[s]$. If no copies are found, define $f_\alpha[s]=f_\alpha[s-1]$, leave $n_\alpha$ unchanged, and take the $w_{n_\alpha}$ outcome. Otherwise, extend $f_\alpha[s-1]$ to $f_\alpha[s]$ by mapping the components to their respective copies in $\mathcal{M}_e[s]$, increment $n_\alpha$ by $1$, and take the $\infty$ outcome.

\subsection{$T_i^p$-strategies and outcomes}\label{formal T strategies}

For each $p\in P_0$, we have the following $T_i^p$-strategy. Let $\alpha$ be a $T_i^p$-strategy eligible to act at stage $s$.

\textbf{Case $\mathbf{1}$}: If $\alpha$ is acting for the first time or has been initialized since the last $\alpha$-stage, set $n_\alpha=0$, define $g_\alpha^{D_p}[s]$ to be the empty function, and take the $w_0$ outcome. 

\textbf{Case $\mathbf{2}$}: $\alpha$ is currently challenged by an $R_e^q$-strategy $\beta$ where $\alpha^\frown\<\infty\>\subseteq\beta$. Let $s_0$ be the stage at which $\beta$ challenged $\alpha$. In the verification, we will show that there can only be one strategy challenging $\alpha$ at a time. When $\beta$ challenged $\alpha$ at stage $s_0$, it redefined $n_\alpha$ to be equal to the least $m\leq n_\beta$ such that $g_\alpha^{D_p}$ is not fully defined on the $2m$th and $(2m+1)$st components of $\G$ (for example, we may have that $m<n_\beta$ if an enumeration by $\alpha$'s challenge injures loops in $\mathcal{M}_i^{D_p}$).

If $q<p$ and $s$ is the first $\alpha$-stage since $s_0$ and $n_\alpha$ was greater than $n_\beta$ at stage $s_0$, then we have to perform a preliminary action. In this case, $\beta$ enumerated $u_{\alpha,n_\beta}$ into $A_p$, causing the map $g_\alpha^{D_p}[s_0]$ to become undefined on the $2n_\beta$th and $(2n_\beta+1)$st components of $\G$. Choose a new large number $u_{\alpha,n_\beta}$ and redefine $g_\alpha^{D_p}[s]$ to be equal to $g_\alpha^{D_p}[s_0]$ on the $2$-loops, $(5n_\beta+1)$-loops, and $(5n_\beta+2)$-loops in these components with use $\<u_{\alpha,n_\beta},p\>$. These loops are still defined in $\mathcal{M}_i^{D_p}[s]$ since their $D_p$-uses remained the same and thus were below the old $u_{\alpha,n_\beta}$. This ends the preliminary step.

Next, we perform the main action in this case. If $n_\alpha=n_\beta$ and $\alpha$ is already defined on the $(5n_\alpha+1)$- and $(5n_\alpha+2)$-loops of the $2n_\alpha$th and $(2n_\alpha+1)$st components in $\G$, then $\alpha$ searches for the oldest and lexicographically least copies of the $(5n_\alpha+3)$- and $(5n_\alpha+4)$-loops in $\mathcal{M}_i^{D_p}[s]$. If $g_\alpha^{D_p}$ is not currently defined on any of the loops in the $2n_\alpha$th and $(2n_\alpha+1)$st components of $\G$, then $\alpha$ searches for the oldest and lexicographically least copies of these components in $\mathcal{M}_i^{D_p}[s]$. In either case, if such copies are found, extend $g_\alpha^{D_p}[s]$ to map onto these copies with use $\<u_{\alpha,n_\alpha},p\>$ for large $u_{\alpha,n_\alpha}$, increment $n_\alpha$ by $1$, and check if $n_\alpha>n_\beta$ for this new $n_\alpha$. If yes, take the $\infty$ outcome and declare $\beta$'s challenge to $\alpha$ to be met. If not, then take the $w_{n_\alpha}$ outcome and let $\beta$'s challenge to $\alpha$ remain active.

\textbf{Case $\mathbf{3}$}: $\alpha$ is not currently challenged by an $R_e^q$-strategy. Let $t$ be the last $\alpha$-stage. In this case, $\alpha$ defined $g_\alpha^{D_p}[t]$ on the $2m$th and $(2m+1)$st components with use $\<u_{\alpha,m},p\>$ for $m<n_\alpha$. Let $l_m$ be the max $D_p$-use for the computation of a loop in the image of the $2m$th and $(2m+1)$st components under $g_\alpha^{D_p}[t]$. In the verification, we will show that $l_m<u_{\alpha,m}$ for all $m<n_\alpha$.

\textbf{Step $\mathbf{1}$}: If there is an $m<n_\alpha$ such that $D_p[t]\restriction l_m\neq D_p[s]\restriction l_m$, then let $m$ be the least such value. Note that for $m\leq m^*<n_\alpha$, the map $g_\alpha^{D_p}$ is now undefined on the $2m^*$th and $(2m^*+1)$st components of $\G$. The loops in the image of the $2k$th and $(2k+1)$st components of $\G$ under $g_\alpha^{D_p}[t]$ for $k<m$ remain in $\mathcal{M}_i^{D_p}[s]$. Update $n_\alpha=m$.

\textbf{Step $\mathbf{2}$}: By the update in Step $1$, we have that for each $m<n_\alpha$ that $D_p[t]\restriction l_m=D_p[s]\restriction l_m$. For each $m<n_\alpha$, if any, where $D_p[t]\restriction \<u_{\alpha,m},p\>\neq D_p[s]\restriction \<u_{\alpha,m},p\>$, set $g_\alpha^{D_p}[s]=g_\alpha^{D_p}[t]$ on the loops in $\G$ in the $2m$th and $(2m+1)$st components with the same use as at stage $t$.

\textbf{Step $\mathbf{3}$}: We can now perform the main action of this case. $\alpha$ searches for the oldest and lexicographically least copies of the $2n_\alpha$th and $(2n_\alpha+1)$st components of $\G$ in $\mathcal{M}_i^{D_p}[s]$. If no copies are found, leave $g_\alpha^{D_p}$ and $n_\alpha$ unchanged and take outcome $w_{n_\alpha}$. Otherwise, extend $g_\alpha^{D_p}$ by mapping the loops in the $2n_\alpha$th and $(2n_\alpha+1)$st components of $\G$ to their copies in $\mathcal{M}_i^{D_p}$ with use $\<u_{\alpha,n_\alpha},p\>$ where $u_{\alpha,n_\alpha}$ is chosen large,  increment $n_\alpha$ by $1$, and take the $\infty$ outcome.

\subsection{$R_e^q$-strategies and outcomes}\label{R strategy}

Finally, for each $q\in P_1$, we have the following $R_e^q$-strategy. Let $\alpha$ be an $R_e^q$-strategy eligible to act at stage $s$. Recall that $\B_q$ is a $D_q$-computable copy of $\G$ which follows $\G$ on components not chosen by any $R$-strategy. For components chosen by $\alpha$ in particular, we have the following.

\textbf{Case $\mathbf{1}$}: If $\alpha$ is first eligible to act at stage $s$ or has been initialized, define the parameter $n_\alpha=n$ to be large and take outcome $w_0$.

\textbf{Case $\mathbf{2}$}: If we are not in Case $1$ and $\alpha$ took outcome $w_0$ at the last $\alpha$-stage, check whether $\Phi_e^{D_q}[s]$ maps the $2n$th and $(2n+1)$st components of $\G$ isomorphically into $\B_q$. If not, take outcome $w_0$.

If so, set $m_\alpha$ to be the maximum $D_q$-use of these computations. Let $v_\alpha$ be large. Add a $(5n+3)$-loop to $a_{2n}$ in $\G$ (computably) and to $b_{2n}^q$ in $\B_q$ (with $D_q$-use $\<v_\alpha,q\>$) and add a $(5n+4)$-loop to $a_{2n+1}$ in $\G$ (computably) and to $b_{2n+1}^q$ in $\B_q$ (with $D_q$-use $\<v_\alpha,q\>$). 

For each $T_i^p$-strategy $\gamma$ where $\gamma^\frown\<\infty\>\subseteq\alpha$ and $q<p$, enumerate the use $u_{\gamma,n}$ into $A_p$ (and so $\<u_{\gamma,n},p\>$ enters $D_p$), and challenge $\gamma$. Note that if $n_\gamma<n_\alpha$, then there is no $u_{\gamma,n}$ to enumerate into $A_p$. For each $S$-strategy $\beta$ where $\beta^\frown\<\infty\>\subseteq\alpha$, challenge $\beta$ and reset $n_\beta=n_\alpha$ if $n_\alpha>n_\beta$. Otherwise, leave $n_\alpha$ as it is. For each $T$-strategy $\beta$ where $\beta^\frown\<\infty\>\subseteq\alpha$, reset $n_\beta$ to be the least $m\leq n_\alpha$ such that $g_\alpha^{D_p}$ does not match all of the $2m$th and $(2m+1)$st components of $\G$. Take outcome $w_1$.

\textbf{Case $\mathbf{3}$}: If $\alpha$ took outcome $w_1$ at the last $\alpha$-stage, then enumerate $v_\alpha$ into $A_q$, move the $(5n+3)$-loop in $\B_q$ from $b_{2n}^q$ to $b_{2n+1}^q$, and move the $(5n+4)$-loop in from $b_{2n+1}^q$ to $b_{2n}^q$. Attach a $(5n+1)$-loop to $a_{2n+1}$ and $b_{2n+1}^q$ and a $(5n+2)$-loop to $a_{2n}$ and $b_{2n}^q$. Let the $D_q$-use of these new loops in $\B_q$ be equal to the stage number $s$. Take outcome $s$. 

\textbf{Case $\mathbf{4}$}: If $\alpha$ took outcome $s$ at the last $\alpha$-stage and has not been initialized, then take outcome $s$.

\subsection{Verification}

We first prove that the map $h$ where $h(p)=D_p$ is an embedding into the c.e.\ degrees (if all $N_e^p$ requirements are satisfied) and the computable and $D_p$-computable embeddings for $p\in P_0$, if they are defined, are the isomorphisms needed for $\G$'s categoricity. We will then prove key observations about the construction before stating the main verification lemma.

    \begin{lemma}\label{order-preserving map}
        Suppose that for all $p\in P$ and $e\in\omega$, the requirement $N_e^p$ is satisfied. Then, for all $p,q\in P$, we have that $p\leq q$ if and only if $D_p\leq_T D_q$.
    \end{lemma}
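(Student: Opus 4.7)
The plan is to prove the two directions of the biconditional separately, with the reverse direction argued by contrapositive using the independence requirements.

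For the forward direction, I would argue directly from the definition of $D_p$. If $p \leq q$ in $P$, then by transitivity $\{r \in P : r \leq p\} \subseteq \{r \in P : r \leq q\}$, so each $A_r$ appearing as a summand of $D_p = \bigoplus_{r \leq p} A_r$ also appears as a summand of $D_q = \bigoplus_{r \leq q} A_r$. Since $P$ is computable, the inclusion of indices is computable, and so $D_p$ is obtained from $D_q$ by computably selecting the appropriate columns. Hence $D_p \leq_T D_q$.

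For the reverse direction, I would argue the contrapositive: assume $p \not\leq q$ and show $D_p \not\leq_T D_q$. The key observation is that if $r \leq q$, then $r \neq p$, since $r = p$ would give $p \leq q$. So $\{r \in P : r \leq q\} \subseteq \{r \in P : r \neq p\}$, and by the same computable-column argument as above, $D_q \leq_T \overline{D_p}$. Also, $A_p \leq_T D_p$ trivially, because $p \leq p$ so $A_p$ is itself a column of $D_p$. If we had $D_p \leq_T D_q$, then chaining the reductions would give $A_p \leq_T \overline{D_p}$, i.e., there would be some index $e$ with $\Phi_e^{\overline{D_p}} = A_p$. This contradicts the satisfaction of $N_e^p$.

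The argument is almost entirely bookkeeping about which $A_r$ appear as columns in which join, together with the one key reduction $A_p \leq_T D_p \leq_T D_q \leq_T \overline{D_p}$. The only point worth being careful about is ensuring $P$'s computability is invoked to make ``select the appropriate columns'' into an actual Turing reduction rather than just a set-theoretic inclusion, and noting that computability of the partition $P = P_0 \sqcup P_1$ is not used here (only computability of $\leq$). I do not expect any real obstacle; the content of the lemma lies entirely in the hypothesis that every $N_e^p$ is met, which supplies the contradiction in the reverse direction.
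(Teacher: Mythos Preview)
Your proposal is correct and follows essentially the same approach as the paper: the forward direction uses the inclusion of summands, and the reverse direction (by contrapositive) observes that $p\not\leq q$ gives $D_q\leq_T\overline{D_p}$, so $D_p\leq_T D_q$ would force $A_p\leq_T\overline{D_p}$, contradicting the $N_e^p$ requirements. Your write-up is slightly more explicit about invoking the computability of $P$ for the column-selection reductions, which the paper leaves implicit, but the argument is the same.
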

    \begin{proof}
    Assume that each $N_e^p$ requirement was satisfied and suppose that $p\leq q$. Since $p\leq q$, for all $r$ where $r\leq p$, we have that $r\leq q$ as well and so $D_p\leq_T D_q$. If $p\not\leq q$, then since
    \[
    A_p\not\leq_T\bigoplus_{t\neq p} A_t,
    \]
    it immediately follows that $A_p\not\leq_T\bigoplus\limits_{t\leq q} A_t$ and hence $D_p\not\leq_T D_q$.
    \end{proof}

    \begin{lemma}\label{Observation 3}
    If $f:\G\to\G$ is an embedding of $\G$ into itself, then $f$ is an isomorphism (and is, in fact, the identity map).
    \end{lemma}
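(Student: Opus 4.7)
The plan is to exploit the rigidity of the loop configurations specified in Sections \ref{informal S strategy} and \ref{informal R strategy}. First I would give an internal characterization of the root vertices: a vertex of $\G$ lies on at least two directed cycles of distinct lengths if and only if it is a root $a_i$. This holds because every non-root vertex lies inside a single loop with in-degree and out-degree equal to $1$, and hence on exactly one directed cycle. Consequently any embedding $f\colon \G\to\G$ sends roots to roots, and since it preserves connectedness, it maps each connected component into a single connected component.

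Next, I would fix $s$ and argue that $f(a_{2s}) = a_{2s}$ and $f(a_{2s+1}) = a_{2s+1}$. The key observation is that directed cycles of lengths $5s+1,5s+2,5s+3,5s+4$ appear in $\G$ only inside the $2s$th and $(2s+1)$st components, so $f(a_{2s})$ must be a root of one of those two. I would then case on the three possible final configurations listed in Section \ref{informal R strategy}: in the initial and half-diagonalized configurations only $a_{2s}$ carries a $(5s+1)$-loop, forcing $f(a_{2s}) = a_{2s}$; in the fully homogenized configuration both roots share loops of lengths $2, 5s+1, 5s+2$, but are distinguished by $a_{2s}$'s $(5s+3)$-loop versus $a_{2s+1}$'s $(5s+4)$-loop, again forcing $f(a_{2s}) = a_{2s}$. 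The symmetric argument yields $f(a_{2s+1}) = a_{2s+1}$.

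With every root pointwise fixed, I would finish by showing $f$ is the identity on each attached loop. A $k$-loop at $a_{2s}$ is a directed $k$-cycle through $a_{2s}$, and because the loop lengths at $a_{2s}$ are pairwise distinct, $f$ sends this cycle to a directed $k$-cycle through $a_{2s}$, which can only be the same loop. Injectivity together with the unique outgoing edge from $a_{2s}$ inside this cycle then propagates $f(v_j) = v_j$ inductively around the loop. The main obstacle is the homogenized-case analysis in the second step, where three of the four loop-lengths at the two roots coincide; but the $(5s+3)$ versus $(5s+4)$ asymmetry introduced in the final step of the $R$-strategy is precisely what rules out the potential swap, and once that is handled, everything else follows from the pairwise distinctness of the loop lengths.
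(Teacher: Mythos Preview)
Your proposal is correct and follows essentially the same approach as the paper's proof: characterize roots as the vertices lying on more than one loop, use the $(5s+1)$-loop to pin $f(a_{2s})$ down to $\{a_{2s},a_{2s+1}\}$, and then use the $(5s+3)$/$(5s+4)$ asymmetry to rule out the swap. The paper compresses your three-case analysis into the single observation that $a_{2n+1}$ carries a $(5n+1)$-loop only in the homogenized configuration, where $a_{2n}$ has a $(5n+3)$-loop that $a_{2n+1}$ lacks; your version spells this out more explicitly and also makes the ``identity on each loop'' step explicit via directedness, which the paper leaves as a one-line remark.
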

    \begin{proof}
    Let $f:\G\to\G$ be an embedding. Since embeddings preserve loops and only the root nodes $a_m$ are contained in more than one loop, $f$ must map root nodes to root nodes. Furthermore, since only the root nodes $a_{2n}$ and $a_{2n+1}$ can have $(5n+1)$-loops, we can only have that $f(a_{2n})=a_{2n}$ or $f(a_{2n})=a_{2n+1}$. However, the only situation in which $a_{2n+1}$ has a $(5n+1)$-loop is when we attached a $(5n+3)$-loop to $a_{2n}$ but \textit{not} to $a_{2n+1}$. Thus, $f(a_{2n})=a_{2n}$ and similarly, $f(a_{2n+1})=a_{2n+1}$. Since $\G$ is a directed graph, it must map the loops attached to $a_{2n}$ and to $a_{2n+1}$ identically onto themselves.
    \end{proof}

    \begin{lemma}\label{Observation 4}
    If $\mathcal{M}_e\cong\G$ for a computable directed graph $\mathcal{M}_e$ and $f_e:\G\to\mathcal{M}_e$ is an embedding defined on all of $\G$, then $f_e$ is an isomorphism.
    \end{lemma}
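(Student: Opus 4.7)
The plan is to reduce the statement to Lemma \ref{Observation 3} by composing with an arbitrary isomorphism witness. Since $\mathcal{M}_e \cong \G$, fix any isomorphism $\phi : \mathcal{M}_e \to \G$ (its existence, not its computability, is all we need here). Then the composition $\phi \circ f_e : \G \to \G$ is a map from $\G$ to itself that preserves and reflects the edge relation, because $f_e$ is an embedding and $\phi$ is an isomorphism; in particular $\phi \circ f_e$ is itself an embedding of $\G$ into $\G$.

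By Lemma \ref{Observation 3}, every embedding of $\G$ into itself is the identity. Thus $\phi \circ f_e = \id_\G$, which forces $f_e = \phi^{-1}$. Since $\phi^{-1}$ is an isomorphism, so is $f_e$, which is what we wanted. This also shows, as a byproduct, that the image of $f_e$ is all of $\mathcal{M}_e$, even though we only assumed $f_e$ was defined on all of $\G$ and structure-preserving in one direction.

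The only subtlety worth checking carefully is that ``embedding'' in this paper means an injective map preserving and reflecting the (directed) edge relation, so that compositions with isomorphisms stay within the class of embeddings; once that is confirmed, the argument is essentially a one-line consequence of Lemma \ref{Observation 3}. There is no real obstacle: the rigidity of $\G$ established in the previous lemma does all the work, and this lemma is the bridge needed later to conclude that the computable partial embeddings $f_\alpha$ built by successful $S_e$-strategies on the true path extend to genuine isomorphisms whenever $\G \cong \mathcal{M}_e$.
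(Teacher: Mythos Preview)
Your proof is correct and is exactly the argument the paper intends: the paper's own proof is the single line ``This follows immediately from Lemma~\ref{Observation 3},'' and your composition with an isomorphism $\phi:\mathcal{M}_e\to\G$ to invoke that lemma is precisely how one unpacks that sentence.
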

    \begin{proof}
    This follows immediately from Lemma \ref{Observation 3}.
    \end{proof}

    By Lemma \ref{Observation 4}, if $\mathcal{M}_e\cong\G$ or $\G\cong\mathcal{M}_i^{D_p}$, then there is a unique isomorphism from $\G$ to $\mathcal{M}_e$ and from $\G$ to $\mathcal{M}_i^{D_p}$. We refer to the image of the $2n$th and $(2n+1)$st components of $\G$ in $\mathcal{M}_e^{D_p}$ as the \textbf{true copies} of these components in $\mathcal{M}_i^{D_p}$. We now prove several auxiliary lemmas about the construction.

    \begin{lemma}\label{true copies remain}
        If $\mathcal{G}\cong\mathcal{M}_i^{D_p}$, then for each $n$, there is an $s$ such that for all $t\geq s$, the true copies of the $2n$th and $(2n+1)$st components of $\G$ in $\mathcal{M}_i^{D_p}$ are the oldest and lexicographically least isomorphic copies in $\mathcal{M}_i^{D_p}[t]$ of these components.
    \end{lemma}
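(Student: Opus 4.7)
My plan is to show that, for each $n$, there is a stage $s_0$ past which the true copies $T_{2n}$ and $T_{2n+1}$ of the $2n$th and $(2n+1)$st components of $\G$ are stably present in $\mathcal{M}_i^{D_p}$ with fixed ages $a_0 = \age(T_{2n})$ and $a_1 = \age(T_{2n+1})$, and then a later stage $s^*$ past which no other subgraph of the appropriate type can match or beat those ages. This will make the true copies the \emph{uniquely} oldest copies of their respective components, which automatically makes them the lex least among the oldest as well.

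First I would observe that the true copies stabilize. Each is a finite subgraph of $\mathcal{M}_i^{D_p}$, and each of its edges is witnessed by a converging $\Phi_i^{D_p}$ computation with some finite use. Since $D_p$ is c.e., there is a stage $s_0$ after which $D_p$ has settled on all of these uses. For $t \geq s_0$, both $T_{2n}$ and $T_{2n+1}$ are fully present in $\mathcal{M}_i^{D_p}[t]$, so $\age(T_{2n}) = a_0$ and $\age(T_{2n+1}) = a_1$ are constant (each $\leq s_0$) from then on.

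The main step is eliminating competing copies that are as old as $T_{2n}$ or $T_{2n+1}$. Fix the $2n$th component; the argument for the $(2n+1)$st is identical. Let $\mathcal{C}$ denote the collection of subgraphs $C$ isomorphic to the $2n$th component, distinct from $T_{2n}$, that are fully present in $\mathcal{M}_i^{D_p}[t']$ for some $t' \leq a_0$. Since $\mathcal{M}_i^{D_p}[t']$ has only finitely many edges at each stage $t' \leq a_0$, and only finitely many such stages exist, $\mathcal{C}$ is finite. For each $C \in \mathcal{C}$, the vertex set $V(C)$ differs from $V(T_{2n})$. Since the connected components of $\G$ have pairwise non-isomorphic loop configurations, the limit graph $\mathcal{M}_i^{D_p} \cong \G$ contains exactly one subgraph isomorphic to the $2n$th component, namely $T_{2n}$ itself (invoking Lemma \ref{Observation 4}). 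Therefore $C$ must contain at least one edge $f_C$ that is not an edge of $\mathcal{M}_i^{D_p}$ in the limit. Since $\mathcal{M}_i^{D_p}$ is a well-defined graph (from $\G \cong \mathcal{M}_i^{D_p}$), the computation $\Phi_i^{D_p}(f_C)$ converges with value $\neq 1$, so $f_C$ is absent from $\mathcal{M}_i^{D_p}[t]$ for all $t$ beyond some stage. Taking $s^* \geq s_0$ past the stabilization of every $f_C$ for $C \in \mathcal{C}$, for $t \geq s^*$ the only copy of the $2n$th component with age $\leq a_0$ in $\mathcal{M}_i^{D_p}[t]$ is $T_{2n}$; hence $T_{2n}$ is uniquely oldest and (trivially) the lex least among the oldest. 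Doing this for both $T_{2n}$ and $T_{2n+1}$ and taking the max of the resulting stages gives the required $s$.

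The hard part will be the stabilization of false edges, which relies on $\Phi_i^{D_p}$ being total on the relevant inputs; this is justified by the hypothesis $\G \cong \mathcal{M}_i^{D_p}$ forcing $\mathcal{M}_i^{D_p}$ to be well-defined in the limit. A secondary subtlety is the finiteness of $\mathcal{C}$, which boils down to observing that only finitely many subgraphs can appear in $\mathcal{M}_i^{D_p}[t']$ at any finite stage, and that restricting to age $\leq a_0$ leaves only finitely many stages $t'$ to consider.
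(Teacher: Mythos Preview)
Your argument is correct and follows essentially the same approach as the paper: first stabilize the true copies by waiting for $D_p$ to settle below their uses, then argue that the finitely many older fake copies must eventually disappear. You supply more detail than the paper does at the second step, explicitly identifying a false edge $f_C$ in each competitor and using totality of $\Phi_i^{D_p}$ to force it to vanish; the paper simply asserts that the fake copies ``will disappear as numbers enter $D_p$.'' One small remark: your appeal to Lemma~\ref{Observation 4} to conclude that the limit graph contains exactly one subgraph isomorphic to the $2n$th component is slightly off-target, since that lemma concerns total embeddings of $\G$; the uniqueness you need is really the rigidity argument in the proof of Lemma~\ref{Observation 3} (only $a_{2n}$ carries the distinguishing loop configuration), though the conclusion is correct.
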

   \begin{proof}
       Let $u$ be the maximum $D_p$-use for the edges in the true copies of these components in $\mathcal{M}_i^{D_p}$ and let $s_0$ be such that $D_p\restriction(u+1)[s_0]=D_p\restriction(u+1)$. Because $D_p$ is c.e., the true components will be defined at every stage $s\geq s_0$. There may be a finite number of older fake copies of these components, but they will disappear as numbers enter $D_p$, and so for a large enough $s\geq s_0$, the true copies will be the oldest and lexicographically least in $\mathcal{M}_i^{D_p}[s]$.
   \end{proof}

\begin{lemma}\label{N restraint lemma}
    Let $\alpha$ be an $N_e^p$-strategy that enumerates $x_\alpha$ into $A_p$ at stage $s$. Unless $\alpha$ is initialized, no number below $\use(\Phi_e^{\widehat{D_p}}(x_\alpha)[s])$ is enumerated into $\widehat{D_p}$ after stage $s$.
\end{lemma}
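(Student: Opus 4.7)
The plan is a standard priority argument by contradiction. Suppose $\alpha$ is not initialized after stage $s$ but some strategy $\beta$ enumerates a number $y$ into $A_r$ with $r \neq p$ at a stage $s' > s$; I will derive $y \geq u := \use(\Phi_e^{\overline{D_p}}(x_\alpha)[s])$, which suffices since the standard pairing satisfies $\langle y, r \rangle \geq y$. Note $\beta \neq \alpha$ (as $\alpha$ only enumerates into $A_p$), so $y$ is one of: the parameter $x_\beta$ of an $N$-strategy $\beta$ in its Case~2, the parameter $v_\beta$ of an $R$-strategy $\beta$ in its Case~3, or a value $u_{\gamma, n_\beta}$ assigned by a $T$-strategy $\gamma$ with $\gamma^\frown \langle \infty \rangle \subseteq \beta$ in an $R$-strategy's Case~2. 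I case-split on $\beta$'s position relative to $\alpha$ in the tree.

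Case 1 ($\beta \subsetneq \alpha$, or $\beta <_L \alpha$ with them incomparable). If $\beta <_L \alpha$ with them incomparable, then $\beta \subseteq p_{s'}$ gives $p_{s'} <_L \alpha$, contradicting that $\alpha$ is not initialized. Otherwise $\beta \subsetneq \alpha$, and I use the key fact that the outcomes of $N$- and $R$-strategies are monotonically non-increasing in the $\Lambda$-order: an $N$-strategy goes only $w_0 \to s$, an $R$-strategy only $w_0 \to w_1 \to s$, and neither transitions rightward. Every enumeration is triggered precisely by such a strictly leftward transition, so $\beta$'s outcome at $s'$ is strictly $<_\Lambda$ its outcome at stage $s$, which is $\alpha(|\beta|)$. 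Thus $p_{s'}$ extends a node strictly to the left of $\alpha$, forcing $\alpha$ to be initialized---contradiction.

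Case 2 ($\alpha \subsetneq \beta$, or $\alpha <_L \beta$ with them incomparable). The crucial claim is that $\beta$'s current lifetime (the interval since $\beta$'s most recent initialization) begins no earlier than stage $s$. If $\alpha <_L \beta$, then $\alpha \subseteq p_s$ forces $\beta >_L p_s$, so $\beta$ is initialized at $s$. If $\alpha \subsetneq \beta$, then $\beta$ must extend $\alpha^\frown \langle s \rangle$ (since $\alpha$'s outcome remains $s$ throughout its current lifetime from $s$ on); before $s$ in this lifetime $\alpha$'s outcome was $w_0$, so $\beta$ was strictly to the left of the true path and idle, and any earlier lifetime of $\alpha$ ended with an initialization that, because $\alpha \subsetneq \beta$, also initialized $\beta$. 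Hence $\beta$'s first action $t_\beta$ in its current lifetime satisfies $t_\beta \geq s$, and any of $\beta$'s own parameters ($x_\beta$, $n_\beta$, $v_\beta$) is picked large at a stage $\geq t_\beta \geq s$ and therefore exceeds $u$.

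It remains to handle $y = u_{\gamma, n_\beta}$ for a $T$-strategy $\gamma \subsetneq \beta$, where $\gamma$ may have a history predating $s$ (in particular when $\gamma \subsetneq \alpha$); this is where I expect the main obstacle. Because $n_\beta$ is picked large at $t_\beta \geq s$, we have $n_\beta > n_\gamma(t_\beta)$; for $u_{\gamma, n_\beta}$ to be defined at $s'$, the strategy $\gamma$ must subsequently have taken its $\infty$-outcome with $n_\gamma = n_\beta$, and at the stage $t' \geq t_\beta$ when this occurs $\gamma$ assigns $u_{\gamma, n_\beta}$ large, so $u_{\gamma, n_\beta} > u$. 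If $\gamma$'s $n_\gamma$ later drops below $n_\beta + 1$ (via Step~1 of its Case~3) or $\gamma$ is fully initialized, a fresh $u_{\gamma, n_\beta}$ is assigned at a still later stage; a full initialization of $\gamma$ propagates to $\beta$ (as $\gamma \subsetneq \beta$), replacing $t_\beta$ by a still later stage, with the same argument applied. In every scenario $y \geq u$, which is the desired contradiction.
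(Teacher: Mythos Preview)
Your proof is correct and follows the same underlying approach as the paper's: a case split on the position of the enumerating strategy $\beta$ relative to $\alpha$, using that enumerations by $N$- and $R$-strategies force a strictly leftward outcome shift (so $\beta\subsetneq\alpha$ or $\beta<_L\alpha$ would initialize $\alpha$), while any $\beta$ acting through $\alpha^\frown\langle s\rangle$ must pick its parameters fresh after stage $s$. Your write-up is in fact more careful than the paper's terse version on one point: you explicitly argue that the value $u_{\gamma,n_\beta}$ enumerated by an $R$-strategy $\beta\supseteq\alpha^\frown\langle s\rangle$ is large even when $\gamma\subsetneq\alpha$, by observing that $n_\beta$ is chosen large after stage $s$ so $u_{\gamma,n_\beta}$ cannot have been assigned earlier—this is the only subtle wrinkle, and the paper's proof leaves it implicit.
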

\begin{proof}
    After $\alpha$ enumerates $x_\alpha$ into $A_p$ at stage $s$, all strategies extending $\alpha^\frown\<s\>$ will define new large parameters greater than $\use(\Phi_e^{\widehat{D_p}}(x_\alpha)[s])$. In particular, if there is an $R$-strategy $\beta\supseteq\alpha^\frown\<s\>$, it will define its parameter $n_\beta$ to be large. In the event that it issues a challenge to all higher priority $S$ and $T$ strategies $\gamma^\frown\<\infty\>\subseteq\alpha^\frown\<s\>\subseteq\beta$ on the $2n_\beta$th and $(2n_\beta+1)$st components, the use $u_{\gamma,n_\beta}$, if it exists, was chosen to be large (and so $u_{\gamma,n_\beta}>n_\beta$) on the mentioned components. Thus, $u_{\gamma,n_\beta}>\use(\Phi_e^{\widehat{D_p}}(x_\alpha)[s])$ also.
    
    The only strategies which have parameters smaller than $\use(\Phi_e^{\widehat{D_p}}(x_\alpha)[s])$ are to the left of $\alpha$ on the tree of strategies or are $R$- or $N$-strategies $\beta$ such that $\beta\subset\alpha$. When $\beta$ enumerates a number into their assigned c.e.\ set, then it will take outcome $s$ and initialize $\alpha$.
\end{proof}

   \begin{lemma}\label{Observation 1}
    An $S_e$-strategy or a $T_i^p$-strategy can be challenged by at most one $R$-strategy at any given stage.
    \end{lemma}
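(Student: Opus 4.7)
The plan is to argue by contradiction. Suppose two distinct $R$-strategies $\beta_1\ne\beta_2$ both currently challenge $\alpha$ at a stage $s$, so $\alpha^\frown\<\infty\>\subseteq\beta_j$ for $j=1,2$. Let $t_j\leq s$ be the stage at which $\beta_j$ issued its most recent still-active challenge, i.e.\ the stage at which $\beta_j$ most recently executed Case 2 of the $R$-strategy by taking outcome $w_1$. Without loss of generality $t_1\leq t_2$. The two key inputs are the initialization rule, which initializes $\beta$ whenever $p_u<_L\beta$, and the observation by inspection of Case 2 of the $S$- and $T$-strategies that while $\alpha$ is challenged, it takes outcome $\infty$ only by simultaneously declaring the current challenge met.

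For the case $t_1=t_2$, both $\beta_1,\beta_2$ lie on $p_{t_1}$, so one is a prefix of the other; say $\beta_1\subsetneq\beta_2$. Then $\beta_1$ acts first and moves from its previous outcome $w_0$ (required for Case 2) to $w_1$. I claim this is the first stage of a fresh run of $\beta_1$ at outcome $w_1$: any earlier run was followed by $\beta_1$ moving to outcome $s$ at the next $\beta_1$-stage in Case 3, at which point $\beta_2$ (extending $\beta_1^\frown\<w_1\>$) lay to the right of the true path (since $s<_\Lambda w_1$) and was therefore initialized; any intervening initializations of $\beta_1$ itself likewise initialize $\beta_2$, since any $u$ with $p_u<_L\beta_1$ also satisfies $p_u<_L\beta_2$. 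Hence at $t_1$ the strategy $\beta_2$ has cleared state, is in Case 1, and takes outcome $w_0$, so it cannot issue a challenge at $t_2=t_1$.

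For the case $t_1<t_2$, we have $\beta_2\subseteq p_{t_2}$ and hence $\alpha^\frown\<\infty\>\subseteq p_{t_2}$, so $\alpha$ takes outcome $\infty$ at stage $t_2$. By the choice of $t_1$ as the most recent still-active issuance, and since an active challenge deactivates only when declared met or when $\beta_1$ is initialized (neither of which occurred between $t_1$ and $s$, by hypothesis), $\beta_1$'s challenge is active at the start of $t_2$. Consequently $\alpha$ enters its Case 2 addressing $\beta_1$, and by inspection of that case $\alpha$ takes outcome $\infty$ only after incrementing $n_\alpha$ past $n_{\beta_1}$ and declaring the challenge met. This contradicts the assumption that $\beta_1$'s challenge remains active at $s\geq t_2$.

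The main obstacle is fixing a precise definition of ``currently challenged'' that supports the monotonicity used in the second case. I would define $\beta$'s challenge to $\alpha$ to be active at stage $s$ exactly when there exists $t\leq s$ at which $\beta$ executed Case 2 of the $R$-strategy taking outcome $w_1$ and, in the interval $(t,s]$, neither has $\alpha$ declared this specific challenge met in its own Case 2 nor has $\beta$ been initialized. Under this definition, the two cases above close the argument using only the priority tree bookkeeping built into the construction.
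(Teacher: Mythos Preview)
Your proof is correct and uses essentially the same two observations as the paper: at the stage a challenge is issued, any longer $R$-strategy on the path is freshly reset (your $t_1=t_2$ case), and at later stages a challenged $\alpha$ cannot take outcome $\infty$ without first declaring the challenge met, so no strategy extending $\alpha^\frown\langle\infty\rangle$ is visited (your $t_1<t_2$ case). The paper phrases this as a short direct argument rather than by contradiction, but the content is the same.
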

    \begin{proof}
    Let $\alpha$ be an $S_e$-strategy (or $T_i^p$-strategy) and suppose that there exists some $\beta$ such that $\alpha^\frown\<\infty\>\subseteq\beta$ and $\beta$ is an $R$-strategy that challenges $\alpha$. If $\beta$ challenges $\alpha$ at a stage $s$, then $\beta$ takes the $w_1$ outcome for the first time. The strategies extending $\beta^\frown\<w_1\>$ will choose witnesses at stage $s$, and in particular, none will challenge $\alpha$. So, at most one strategy will challenge $\alpha$ at stage $s$. Until $\alpha$ is able to match the newly added loops in $\G$ to meet the challenge, $\alpha$ will take outcome $w_{n_\alpha}$. $R$-strategies $\gamma$ such that $\gamma\supseteq\alpha^\frown\<w_{n_\alpha}\>$ will not challenge $\alpha$ since $w_{n_\alpha}\neq\infty$.
    \end{proof}

    \begin{lemma}\label{Observation 2}
    Suppose $\alpha$ is an $R_e^q$-strategy that is never initialized after stage $s$. Then $\alpha$ can only challenge higher priority $S$-strategies and $T$-strategies at most once after stage $s$.
    \end{lemma}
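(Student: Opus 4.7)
The plan is to trace the possible sequence of outcomes of $\alpha$ at $\alpha$-stages after stage $s$, and argue that a challenge can only be issued during a single specific case transition. First, I would examine the formal description of $R_e^q$-strategies in Section \ref{R strategy} and note that Cases 1, 3, and 4, as well as the $w_0$-branch of Case 2 (where $\Phi_e^{D_q}$ does not yet map the relevant components isomorphically), involve no challenges at all. Challenges to higher priority $S$- and $T$-strategies are issued only in the active branch of Case 2, which is precisely the transition from outcome $w_0$ to outcome $w_1$ at which $\alpha$ adds the $(5n+3)$- and $(5n+4)$-loops in both $\G$ and $\B_q$.

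Next, I would argue that once $\alpha$ takes outcome $w_1$ at some $\alpha$-stage $s'>s$, its subsequent behavior is forced and contains no further challenges. At the next $\alpha$-stage after $s'$, Case 3 applies: $\alpha$ enumerates $v_\alpha$ into $A_q$, reconfigures the loops in $\B_q$, homogenizes the components, and takes outcome $s$. Since $\alpha$ is assumed never to be initialized after stage $s$, at every subsequent $\alpha$-stage Case 4 applies and $\alpha$ keeps outcome $s$ forever, issuing no challenges.

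Finally, I would observe that prior to taking outcome $w_1$ (if it ever does after stage $s$), $\alpha$ is in Case 1 or in the $w_0$-branch of Case 2, neither of which issues challenges. Hence the possible sequence of $\alpha$'s outcomes after stage $s$ is of the monotone form $w_0,w_0,\dots,w_0,w_1,s,s,\dots$ (possibly truncated at any point, or starting past $w_0$ if $\alpha$ had already progressed past it before stage $s$), so there is at most one $w_0 \to w_1$ transition after stage $s$, and thus at most one challenge. No real obstacle arises here; the argument is a direct case analysis using that $\alpha$'s outcomes evolve monotonically through $w_0, w_1, s$ in the absence of initialization, and that the challenge action is tied exclusively to the unique $w_0 \to w_1$ transition.
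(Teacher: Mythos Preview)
Your proposal is correct and follows essentially the same approach as the paper: both argue that once $\alpha$ issues a challenge (taking outcome $w_1$), the absence of initialization forces $\alpha$ into Case~3 and then Case~4, taking outcome $s$ forever after and never challenging again. Your version is somewhat more explicit in tracing the monotone outcome sequence $w_0,\dots,w_0,w_1,s,s,\dots$ and in handling the edge case where $\alpha$ had already progressed past $w_0$ before stage $s$, but the underlying argument is the same.
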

    \begin{proof}
    Suppose $\alpha$ is an $R_e^q$-strategy that is never initialized after stage $s$ and suppose it challenges all $S$-strategies and $T$-strategies $\beta$ such that $\beta^\frown\<\infty\>\subseteq\alpha$. Because $\alpha$ is never initialized again, if we ever return to $\alpha$, it will take the $s$ outcome as it can now diagonalize, and will continue to take the $s$ outcome at all subsequent $\alpha$-stages. If we do not return to $\alpha$, $\alpha$ will not be able to challenge any higher priority $S$-strategies or $T$-strategies after stage $s$ since it will never be eligible to act again.
    \end{proof}

    \begin{lemma}\label{enumeration per stage}
        At most one strategy $\alpha$ enumerates numbers at any stage.
    \end{lemma}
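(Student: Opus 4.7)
The plan is to trace through each clause of the construction, pinpoint exactly which actions enumerate a number, and then argue that these actions can happen at most once along the current true path at a given stage. The key structural observation I would invoke is that, in the outcome order $\infty <_\Lambda \cdots <_\Lambda w_2 <_\Lambda s <_\Lambda w_1 <_\Lambda w_0$, every enumeration coincides with a strategy's outcome moving strictly to the left of its previous outcome.

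First I would inspect the case descriptions: $S_e$- and $T_i^p$-strategies perform no enumerations into any $A_p$; an $N_e^p$-strategy enumerates $x_\alpha$ into $A_p$ exactly when it transitions from outcome $w_0$ (in Case $2$) to outcome $s$; an $R_e^q$-strategy enumerates the uses $u_{\gamma,n}$ into the appropriate $A_p$'s exactly when it transitions from $w_0$ (in Case $2$) to $w_1$, and enumerates $v_\alpha$ into $A_q$ exactly when it transitions from $w_1$ (in Case $3$) to $s$. In each case the new outcome is strictly $<_\Lambda$ the previous outcome.

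Next, suppose $\alpha \subseteq p_s$ enumerates at stage $s$; write $o'$ for its outcome at the last $\alpha$-stage and $o$ for its outcome at stage $s$, so $o <_\Lambda o'$. Since $\alpha$ enumerates, it is not in Case $1$ at stage $s$, which means $\alpha$ has not been initialized between its last $\alpha$-stage and stage $s$. At every stage $t$ in between on which the true path passed through $\alpha$, it did so via $\alpha^\frown\<o'\>$, so every extension of $\alpha^\frown\<o\>$ satisfied $p_t <_L \beta$ and was initialized at $t$ by the construction's initialization rule. Hence every extension $\beta \supsetneq \alpha^\frown\<o\>$ that lies on $p_s$ has all parameters undefined at stage $s$, so $\beta$ applies Case $1$, which performs no enumerations.

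I would then propagate downward: once $\beta$ applies Case $1$ at stage $s$ and takes outcome $w_0$, any extension of $\beta^\frown\<w_0\>$ on $p_s$ was similarly initialized since $\beta$'s last visit (or was never visited), so it too is in Case $1$. By induction no strategy strictly below $\alpha$ on $p_s$ enumerates. Conversely, no strategy $\gamma \subsetneq \alpha$ on $p_s$ enumerates at stage $s$, because by the same argument a transition at $\gamma$ would force $\alpha$ itself into Case $1$, contradicting that $\alpha$ enumerates. This yields the uniqueness claim. The main obstacle I expect is the initialization bookkeeping: making crisp that the leftward move $o <_\Lambda o'$ places the entire subtree below $\alpha^\frown\<o\>$ strictly to the right of $p_t$ at every intermediate stage when $\alpha$ was on the true path, so that the initialization clause ``for all $\beta$ with $p_t <_L \beta$, initialize $\beta$'' indeed triggers at each such $t$.
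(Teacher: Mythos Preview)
Your overall strategy matches the paper's: identify that only $N$- and $R$-strategies enumerate, observe that each enumeration coincides with a move to a strictly leftward outcome, and conclude that everything below on $p_s$ is in Case~$1$. However, the mechanism you invoke for that last step is backwards, and this is a genuine gap.

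You write that at each stage $t$ between the last $\alpha$-stage and $s$ on which the path passed through $\alpha$, it did so via $\alpha^\frown\langle o'\rangle$, and hence every $\beta \supseteq \alpha^\frown\langle o\rangle$ satisfied $p_t <_L \beta$ and was initialized. But since $o <_\Lambda o'$, the node $\alpha^\frown\langle o\rangle$ lies to the \emph{left} of $\alpha^\frown\langle o'\rangle \subseteq p_t$, so in fact $\beta <_L p_t$. The initialization rule only resets strategies to the \emph{right} of $p_t$, so $\beta$ is \emph{not} initialized at these stages. Your argument as written therefore does not establish that $\beta$ is in Case~$1$ at stage $s$.

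The correct reason---which the paper compresses into the phrase ``for the first time''---is that the outcome sequences of $N$- and $R$-strategies between initializations are monotone (for $N$: $w_0,\dots,w_0,s,s,\dots$; for $R$: $w_0,\dots,w_0,w_1,s,s,\dots$). Hence when $\alpha$ enumerates and takes outcome $o$, this is the first time $\alpha$ has taken $o$ since $\alpha$ was last in Case~$1$. Consequently $\alpha^\frown\langle o\rangle$ has not been on any $p_t$ since $\alpha$'s last initialization, so no $\beta \supseteq \alpha^\frown\langle o\rangle$ has been visited since then; and $\beta$ was itself initialized when $\alpha$ was (or has never been visited at all). That is why $\beta$ is in Case~$1$ at stage $s$. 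Once you replace your initialization claim with this ``never visited since last reset'' argument, the rest of your proof (including the contrapositive handling of $\gamma \subsetneq \alpha$) goes through and agrees with the paper.
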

    \begin{proof}
        Suppose numbers are enumerated at a stage $s$ and $\alpha$ is the highest priority strategy which enumerates a number. $\alpha$ must either be for an $R_e^q$ or an $N_e^p$ requirement. If $\alpha$ is an $N_e^p$-strategy, it will take the $s$ outcome for the first time, and if $\alpha$ is an $R_e^q$-strategy, it will either take the $w_1$ outcome or the $s$ outcome for the first time. In either case, the remaining strategies which act at stage $s$ will act by simply defining their parameters and taking the $w_0$ outcome. Therefore, $\alpha$ is the only strategy to enumerate a number at stage $s$.
    \end{proof}

    \begin{lemma}\label{enumerating number less than T-uses}
        Let $\alpha$ be a $T_i^p$-strategy that defines $g_\alpha^{D_p}[s_m]$ on the $2m$th and $(2m+1)$st components of $\G$ at stage $s_m$. Until $\alpha$ is initialized (if ever), only strategies $\beta$ such that $\alpha^\frown\<\infty\>\subseteq\beta$ can enumerate a number $n\leq u_{\alpha,m}$ at any stage $t\geq s_m$.
    \end{lemma}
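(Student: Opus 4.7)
The proof is a case analysis on the position of $\beta$ in the tree of strategies and on the identity of the number $n$ that $\beta$ enumerates. The foundational observation is that $u_{\alpha,m}$ was chosen \emph{large} at stage $s_m$, hence strictly exceeds every parameter defined at or before $s_m$; so any number $\leq u_{\alpha,m}$ enumerated after $s_m$ must originate from a parameter defined at a stage $\leq s_m$.

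First I would note that only $N$- and $R$-strategies can enumerate, and any enumerating $\beta$ at stage $t$ must satisfy $\beta\subseteq p_t$. The hypothesis that $\alpha$ is never initialized in $[s_m,t]$ forbids $p_{t'}<_L\alpha$ at any $t'\in[s_m,t]$, so at every such stage either $\alpha\subseteq p_{t'}$ or $\alpha<_L p_{t'}$. This at once rules out $\beta\subsetneq\alpha$ (a first-time enumeration would switch $\beta$'s outcome from $w_0$ leftward to $s$ or $w_1$, moving $p_t$ to the left of $\alpha$ and initializing it), the case $\beta=\alpha$ (a $T$-strategy does not enumerate), and $\beta<_L\alpha$ (eligibility of $\beta$ at $t$ would force $\alpha>_L p_t$ and initialization). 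Thus $\beta$ lies strictly to the right of $\alpha$ in the tree, with either $\alpha\subsetneq\beta$ or $\alpha<_L\beta$.

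Next I would handle enumerations of $\beta$'s own parameter ($x_\beta$ for an $N$-strategy, $v_\beta$ for an $R$-strategy acting in Case~3). If such a parameter is $\leq u_{\alpha,m}$, then by the foundational observation it was defined at some $s'\leq s_m$ and never reset, so $\beta$ was not initialized at $s_m$. The subcase $\alpha<_L\beta$ is then impossible, since it would place $\beta$ to the right of $p_{s_m}\supseteq\alpha$ and force initialization at $s_m$. So $\alpha\subsetneq\beta$; and since $\alpha$ took the $\infty$ outcome at $s_m$, $\beta$ can survive $s_m$ only if it extends $\alpha^\frown\<\infty\>$ (any other outcome $o$ satisfies $\infty<_\Lambda o$, which would put $\beta$ strictly to the right of $p_{s_m}$). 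This yields $\alpha^\frown\<\infty\>\subseteq\beta$.

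The genuinely delicate case, which I expect to be the main obstacle, is when $\beta$ is an $R$-strategy acting in Case~2 and $n=u_{\gamma,n_\beta}$ for some $T$-strategy $\gamma$ with $\gamma^\frown\<\infty\>\subseteq\beta$. Here the ``large'' bound must be transferred across two different parameters chosen by two different strategies at two different stages. The key observation is that if $n_\beta$ were defined after $s_m$, then $n_\beta>u_{\alpha,m}$, and $u_{\gamma,n_\beta}$ cannot be defined until $n_\gamma$ has been incremented to $n_\beta$, which requires at least $n_\beta$ many stages at which $\gamma$ takes the $\infty$ outcome, and so occurs at some stage $t'''\geq n_\beta>u_{\alpha,m}$; the ``large'' choice at $t'''$ then gives $u_{\gamma,n_\beta}>u_{\alpha,m}$, a contradiction. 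Hence $n_\beta$ was defined at a stage $\leq s_m$, $\beta$ was not initialized at $s_m$, and the tree-location analysis from the previous paragraph applies verbatim to conclude $\alpha^\frown\<\infty\>\subseteq\beta$.
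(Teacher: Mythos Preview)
Your argument is correct and follows essentially the same approach as the paper: rule out $\beta\subsetneq\alpha$ and $\beta<_L\alpha$ because acting there would initialize $\alpha$, then use that $\alpha$ took the $\infty$ outcome at $s_m$ (initializing everything to the right of $\alpha^\frown\langle\infty\rangle$) to force any surviving small parameter to belong to a strategy extending $\alpha^\frown\langle\infty\rangle$. The one place you take a detour is the $u_{\gamma,n_\beta}$ case: your stage-counting bound $t'''\geq n_\beta$ is valid, but the paper's argument is shorter --- since $n_\beta$ is chosen large, at that moment $n_\gamma<n_\beta$, so $u_{\gamma,n_\beta}$ has not yet been defined and must therefore be chosen large afterward, giving $u_{\gamma,n_\beta}>n_\beta>u_{\alpha,m}$ directly.
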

    \begin{proof}
        Let $\alpha$ be such a $T_i^p$-strategy. After defining $g_\alpha^{D_p}[s_m]$ on the $2m$th and $(2m+1)$st components of $\G$ at stage $s_m$, it takes the $\infty$ outcome. Hence, all strategies extending $\alpha^\frown\<w_k\>$ for some $k$ or to the right of $\alpha$ are initialized. These strategies will choose new parameters larger than $u_{\alpha,m}$ when they are next eligible to act. They are also the only strategies not extending $\alpha^\frown\<\infty\>$ which can enumerate numbers without initializing $\alpha$, so it suffices to show that they will not enumerate numbers below $u_{\alpha,m}$. Let $\beta$ be such a strategy.

        If $\beta$ is an $N_{e'}^{p'}$-strategy, then it can only enumerate its parameter $x_\beta$ into $A_{p'}$, and it was chosen such that $x_\beta>u_{\alpha,m}$. If $\beta$ is an $R_e^q$-strategy then it enumerates two types of numbers: $v_\beta$ and $u_{\gamma,n_\beta}$ for any $T_{i'}^{p'}$-strategy such that $\gamma^\frown\<\infty\>\subseteq\beta$ and $p'<q$. Since $v_\beta$ will be chosen large after stage $s_m$, we have that $v_\beta>u_{\alpha,m}$. Since $\beta\supseteq\alpha^\frown\<w_m\>$, we have that $n_\beta>u_{\alpha,m}$ because $n_\beta$ was chosen to be large after $u_{\alpha,m}$ was defined. In particular, when $n_\beta$ is chosen, $\gamma$ cannot have defined $u_{\gamma,n_\beta}$ since $n_\beta$ is large, so when $\gamma$ defines $u_{\gamma,n_\beta}$ later, it must satisfy $u_{\gamma,n_\beta}>n_\beta$. Hence $u_{\gamma,n_\beta}>u_{\alpha,m}$.
    \end{proof}

    \begin{lemma}\label{T-maps remain}
        Let $\alpha$ be a $T_i^p$-strategy that takes a $w_k$ outcome at a stage $s$. Let $t$ be the next $\alpha$-stage. Unless $\alpha$ has been initialized, $D_p[t]\restriction \<u_{\alpha,m},p\>=D_p[s]\restriction \<u_{\alpha,m},p\>$ for all $m<n_\alpha$, and so $g_\alpha^{D_p}[t]$ remains defined on the $2m$th and $(2m+1)$st components of $\G$ for all $m<n_\alpha$. 
    \end{lemma}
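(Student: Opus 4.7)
The plan is to reduce the claim to a statement about which strategies can enumerate numbers into any $A_{p'}$ with $p'\leq p$ between stages $s$ and $t$, and then invoke Lemma \ref{enumerating number less than T-uses}.

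First I would verify that no strategy extending $\alpha^\frown\<\infty\>$ is eligible at any stage $s'$ in $[s,t)$. At stage $s$ itself, $\alpha$ takes outcome $w_k$, so the true path $p_s$ passes through $\alpha^\frown\<w_k\>$ and leaves no extension of $\alpha^\frown\<\infty\>$ on $p_s$. For $s<s'<t$, the choice of $t$ as the next $\alpha$-stage forces $\alpha\not\subseteq p_{s'}$, and hence no extension of $\alpha$ at all can lie on $p_{s'}$.

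Next, I would apply Lemma \ref{enumerating number less than T-uses} to each $m<n_\alpha$. Since $\alpha$ is not initialized during $[s,t)$ by hypothesis, and since the only potential enumerators of numbers $n\leq u_{\alpha,m}$ are extensions of $\alpha^\frown\<\infty\>$, no such small $n$ can be placed into any $A_{p'}$ during this window. Combined with the standard convention that a freshly chosen \emph{large} parameter exceeds all previously coded pairs, every enumerated $n>u_{\alpha,m}$ satisfies $\<n,p'\>>\<u_{\alpha,m},p\>$ whenever $p'\leq p$, so no element of $D_p$ below $\<u_{\alpha,m},p\>$ changes between stages $s$ and $t$.

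It then follows that $D_p[t]\restriction\<u_{\alpha,m},p\>=D_p[s]\restriction\<u_{\alpha,m},p\>$ for every $m<n_\alpha$, and since $\alpha$ set the use of the $g_\alpha^{D_p}$-computations on the $2m$th and $(2m+1)$st components of $\G$ to be exactly $\<u_{\alpha,m},p\>$, those oracle computations still converge to the same values at stage $t$, so $g_\alpha^{D_p}[t]$ remains defined on these components. The main obstacle is the eligibility bookkeeping in the first step, which hinges on $w_k$ lying strictly to the right of $\infty$ in $<_\Lambda$: taking $w_k$ at stage $s$ immediately cuts off access to any extension of $\alpha^\frown\<\infty\>$ for the duration of the $\alpha$-silent window, and this is precisely the regime in which the preceding lemma licenses the required restraint on enumerations.
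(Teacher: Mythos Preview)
Your proposal is correct and follows the same approach as the paper, which simply says the lemma ``follows immediately from Lemma \ref{enumerating number less than T-uses}.'' You have unpacked that immediacy by explicitly arguing that no extension of $\alpha^\frown\langle\infty\rangle$ is eligible during $[s,t)$ and by addressing the passage from ``no number $\leq u_{\alpha,m}$ is enumerated'' to ``$D_p$ is unchanged below $\langle u_{\alpha,m},p\rangle$''; both points are tacitly assumed in the paper but your spelling them out is fine and arguably clearer.
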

    \begin{proof}
        This follows immediately from Lemma \ref{enumerating number less than T-uses}.
    \end{proof}

    \begin{lemma}\label{T-uses are greater than uses for loops}
        Let $\alpha$ be a $T_i^p$-strategy. If $\alpha$ defines $g_\alpha^{D_p}$ on the $2m$th and $(2m+1)$st components of $\G$ at stage $s$, then $l_m[s]<u_{\alpha,m}[s]$ where $l_m[s]$ is the max use of the edges in the $(5m+1)$- and $(5m+2)$-loops in the copies of the $2m$th and $(2m+1)$st components of $\G$ in $\mathcal{M}_i^{D_p}$ and $\<u_{\alpha,m}[s],p\>$ is the $D_p$-use of $g_\alpha^{D_p}[s]$ on these components. Furthermore, for all $\alpha$-stages $t>s$, we have $u_{\alpha,m}[t]\geq u_{\alpha,m}[s]>l_m[s]=l_m[t]$ unless these components in $\mathcal{M}_i^{D_p}$ are injured or $\alpha$ is initialized.
    \end{lemma}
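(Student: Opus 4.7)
The plan is to verify the two claims separately by a direct inspection of the $T_i^p$-strategy description in Section \ref{formal T strategies}.

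For the inequality $l_m[s]<u_{\alpha,m}[s]$, I would observe that $\alpha$ assigns a value to $u_{\alpha,m}$ at stage $s$ only in the main action of Case $2$ or in Step $3$ of Case $3$, and in both instances the strategy explicitly picks $u_{\alpha,m}$ large. Since $\alpha$ has already located isomorphic copies of the $(5m+1)$- and $(5m+2)$-loops inside $\mathcal{M}_i^{D_p}[s]$ before choosing $u_{\alpha,m}$, the value $l_m[s]$ is already determined at stage $s$, so the standard convention that ``large'' exceeds every previously used number immediately yields $u_{\alpha,m}[s]>l_m[s]$.

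For the second statement, I would verify the equality $l_m[t]=l_m[s]$ and the inequality $u_{\alpha,m}[t]\geq u_{\alpha,m}[s]$ independently. The $D_p$-use of a converged $\Phi_i^{D_p}$-computation for any edge in the $(5m+1)$- or $(5m+2)$-loop can change only if some number at most $l_m[s]$ enters $D_p$; by assumption no such injury to the components occurs, so $l_m$ is preserved between stages $s$ and $t$. For the monotonicity of $u_{\alpha,m}$, I would go through the cases: between $\alpha$-stages the parameter cannot change, and at an $\alpha$-stage only the preliminary action of Case $2$ and Step $2$ of Case $3$ can affect it. The former reassigns $u_{\alpha,n_\beta}$ to a freshly chosen large number (hence strictly larger than the previous value), while the latter explicitly retains the former use. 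Combining these bounds yields the chain $l_m[t]=l_m[s]<u_{\alpha,m}[s]\leq u_{\alpha,m}[t]$.

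The main obstacle, such as it is, is purely bookkeeping: ensuring that every sub-case in the strategy which might redefine $g_\alpha^{D_p}$ on the $2m$th and $(2m+1)$st components of $\mathcal{G}$ has been catalogued. In particular, one must notice that Step $2$ of Case $3$ applies exactly when the $g_\alpha^{D_p}$-computation was destroyed by some $x\in D_p$ with $l_m<x\leq\langle u_{\alpha,m},p\rangle$, so the loops in $\mathcal{M}_i^{D_p}$ remain intact (and hence the components are not injured in the sense of the lemma's hypothesis), and retaining the old use is both well-defined and consistent with the monotonicity required.
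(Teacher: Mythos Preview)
Your proposal is correct and follows essentially the same approach as the paper's proof: both verify the claims by direct inspection of the $T_i^p$-strategy cases, noting that $u_{\alpha,m}$ is initially chosen large and is thereafter only redefined to a new large value (preliminary action of Case~2) or kept unchanged (Step~2 of Case~3). The paper additionally makes explicit that the no-injury hypothesis guarantees $n_\alpha$ never drops below $m$ (so the main actions of Case~2 and Step~3 of Case~3 cannot touch $u_{\alpha,m}$), which is the bookkeeping point your final paragraph gestures at but does not fully spell out.
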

    \begin{proof}
        When $\alpha$ initially defines $g_\alpha^{D_p}$ on those components in \textbf{Case} $\mathbf{2}$ or \textbf{Case} $\mathbf{3}$ of its strategy, it chooses $u_{\alpha,m}[s]$ large, and so $u_{\alpha,m}[s]>l_m[s]$.

        Consider an $\alpha$-stage $t>s$ and assume $\alpha$ has not been initialized and the components in $\mathcal{M}_i^{D_p}$ remain intact. Since the $D_p$-use on the edges in the components remains the same, we have $l_m[t]=l_m[s]$, and so $(D_p\restriction l_m[s])[t]=(D_p\restriction l_m[s])[s]$. In particular, any update of the value of $n_\alpha$ in \textbf{Case} $\mathbf{2}$ or in \textbf{Step} $\mathbf{1}$ of \textbf{Case} $\mathbf{3}$ of the $T_i^p$-strategy does not cause $n_\alpha$ to fall below $m$. Therefore, $u_{\alpha,m}[t]$ either has the same value as in the previous $\alpha$-stage, or is updated by the preliminary action of \textbf{Case} $\mathbf{2}$ (and so is chosen large), or is redefined in \textbf{Step} $\mathbf{2}$ of \textbf{Case} $\mathbf{3}$ (to its value at the previous $\alpha$-stage). In all cases, $u_{\alpha,m}[t]\geq u_{\alpha,m}[s]$. 
    \end{proof}

    \begin{lemma}\label{restraint by R stays}
        Let $\alpha$ be an $R_e^q$-strategy that acts in \textbf{Case $\mathbf{2}$} at stage $s$ by defining $v_\alpha$. Let $t>s$ be the next $\alpha$-stage and assume $\alpha$ is not initialized before $t$.
        \begin{itemize}
            \item[(1)] At stage $t$, for all $S$- and $T$-strategies $\beta$ where $\beta^\frown\<\infty\>\subseteq\alpha$, $f_\beta$ and $g_\beta^{D_p}[t]$ is defined on the $2n_\alpha$th and $(2n_\alpha+1)$st components of $\G$. Furthermore, for $T_i^p$-strategies $\beta$, the minimum use of the computations $g_\beta^{D_p}[t]$ on the $(5n_\alpha+3)$- and $(5n_\alpha+4)$-loops is greater than $v_\alpha$.
            \item[(2)] If $q<p$, then the $D_p$-use for $g_\beta^{D_p}[t]$ on $a_{2n_\alpha}$, $a_{2n_\alpha+1}$, the $(5n_\alpha+1)$-loops, $(5n_\alpha+2)$-loops, and the $2$-loops in $\G$ is greater than $v_\alpha$.
        \end{itemize}
    \end{lemma}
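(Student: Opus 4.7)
The plan is to exploit the priority-tree structure: since $t$ is the next $\alpha$-stage and $\alpha$ is not initialized, the fact that $\alpha$ lies on the current true path at $t$ forces every $\beta$ with $\beta^\frown\<\infty\>\subseteq\alpha$ to take outcome $\infty$ at stage $t$. For part (1), I would argue that since $\alpha$ issued a challenge at stage $s$ by adding the $(5n_\alpha+3)$- and $(5n_\alpha+4)$-loops to $\G$ and resetting $n_\beta$ to the least $m\leq n_\alpha$ for which $f_\beta$ or $g_\beta^{D_p}$ is not fully defined, the only way for $\beta$ to take outcome $\infty$ at $t$ is to have incremented $n_\beta$ above $n_\alpha$, which requires extending its map through the newly added loops. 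Hence $f_\beta[t]$ or $g_\beta^{D_p}[t]$ is defined on the full $2n_\alpha$-th and $(2n_\alpha+1)$-st components at stage $t$.

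For the use bound in part (1), I would observe that a $T_i^p$-strategy $\beta$ extends $g_\beta^{D_p}$ onto the new loops in the main action of its \textbf{Case 2} at some $\beta$-stage $s^{\star}\in(s,t]$ with use $\<u_{\beta,n_\alpha},p\>$, where $u_{\beta,n_\alpha}$ is chosen fresh and large at $s^{\star}$; since $v_\alpha$ was defined at stage $s<s^{\star}$, this gives $u_{\beta,n_\alpha}>v_\alpha$. Any subsequent redefinition of $g_\beta^{D_p}$ on these loops between $s^{\star}$ and $t$ either comes from \textbf{Step 2} of \textbf{Case 3} (which keeps the same use) or from a further extension in \textbf{Case 2} (which picks a yet larger value); in either case the use at stage $t$ still exceeds $v_\alpha$.

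For part (2), assuming $q<p$, I would split by the value of $n_\beta$ at stage $s$. If $n_\beta>n_\alpha$ at $s$, then the $R_e^q$-strategy action enumerates $u_{\beta,n_\alpha}$ into $A_p$ at stage $s$, throwing $\<u_{\beta,n_\alpha},p\>$ into $D_p$ and destroying the definition of $g_\beta^{D_p}$ on the $2n_\alpha$-th and $(2n_\alpha+1)$-st components. At the next $\beta$-stage in $(s,t]$ the preliminary-action clause of \textbf{Case 2} fires: $\beta$ picks a new large $u_{\beta,n_\alpha}$ and redefines $g_\beta^{D_p}$ on the root nodes and the $2$-, $(5n_\alpha+1)$-, and $(5n_\alpha+2)$-loops with use $\<u_{\beta,n_\alpha},p\>>v_\alpha$. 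If instead $n_\beta\leq n_\alpha$ at $s$, no enumeration is made on $\beta$'s behalf, but for $\beta$ to reach $n_\beta>n_\alpha$ by $t$ it must first define $g_\beta^{D_p}$ on those components during some main \textbf{Case 2} extension in $(s,t]$, again with a fresh large $u_{\beta,n_\alpha}>v_\alpha$.

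The main obstacle is verifying that the use $u_{\beta,n_\alpha}$, once selected above $v_\alpha$ in $(s,t]$, is not displaced by a smaller value before stage $t$. This reduces to a case analysis on the ways $g_\beta^{D_p}$ can be redefined, namely \textbf{Step 2} of \textbf{Case 3} (which preserves the use) and \textbf{Case 2} (which selects a new large value), together with an appeal to Lemma~\ref{enumerating number less than T-uses} to rule out enumeration of small numbers by strategies initialized on $\alpha^\frown\<w_1\>$ at stage $s$. Lemma~\ref{Observation 1} guarantees that no other $R$-strategy challenges $\beta$ while $\alpha$'s challenge is outstanding, which keeps the preliminary-action analysis clean and yields both conclusions of the lemma.
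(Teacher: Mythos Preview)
Your proposal is correct and follows essentially the same route as the paper: for part~(1) you use the fact that $\alpha$ is visited at stage $t$ to force each $\beta$ to have pushed $n_\beta$ past $n_\alpha$, and you observe that the use on the new loops was chosen large after $v_\alpha$; for part~(2) you split on whether $n_\beta>n_\alpha$ at stage $s$, invoking the preliminary action of \textbf{Case~2} in the first subcase and the fresh main-action use in the second, exactly as the paper does.

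One small point: your appeal to Lemma~\ref{Observation 1} only guarantees that no second $R$-strategy challenges $\beta$ \emph{while $\alpha$'s challenge is still outstanding}; it does not by itself rule out a new challenger $\gamma$ appearing after $\beta$ meets $\alpha$'s challenge at some $s'\in(s,t)$ but before $t$. The paper handles this directly rather than through Lemma~\ref{Observation 1}: if $\beta^\frown\langle\infty\rangle\subseteq\gamma\subsetneq\alpha$ and $\gamma$ issues a challenge (taking outcome $w_1$), then $\alpha$ is initialized; and if $\alpha\subseteq\gamma$, then $\gamma$ cannot act before $t$. Plugging this observation in where you invoke Lemma~\ref{Observation 1} completes your argument and makes it match the paper's.
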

    \begin{proof}
        For ($1$), since $\alpha$ took the $w_1$ outcome at the last $\alpha$-stage $s$ and was not initialized after, it is now in \textbf{Case $\mathbf{3}$} at stage $t$. So in particular, it must be the case that $\alpha$ saw that for all $S$- and $T$-strategies $\beta$ where $\beta^\frown\<\infty\>\subseteq\alpha$ that their parameters $n_\beta$ have exceeded $n_\alpha$. In particular, if $\beta$ is a $T_i^p$-strategy, it extended its map $g_\beta^{D_p}[s]$ on the $(5n_\alpha+3)$- and $(5n_\alpha+4)$-loops in $\G$ with a large use $u_{\beta,n_\alpha}$. For each such $\beta$, we have that $u_{\beta,n_\alpha}>v_\alpha$. Furthermore, we claim that $\beta$ cannot be challenged by another $R$-strategy before stage $t$. Suppose $\gamma$ challenges $\beta$ after $\beta$ meets $\alpha$'s challenge. If $\beta^\frown\<\infty\>\subseteq\gamma\subset\alpha$, then $\gamma$ takes outcome $w_1$ when it challenges $\beta$, initializing $\alpha$. If $\alpha\subseteq\gamma$, then $\gamma$ cannot act until after stage $t$.

        For ($2$), if $\beta$ was a $T_i^p$-strategy where $q<p$ and $n_\beta>n_\alpha$, then $\alpha$ enumerated $u_{\beta,n_\alpha}$ into $A_p$, causing the map $g_\beta^{D_p}[s]$ to now be undefined on the entirety of the $2n_\alpha$th and $(2n_\alpha+1)$st components of $\G$. At stage $t$ when $\alpha$ is eligible to act again, we have that $\beta$ must have recovered its map on $a_{2n_\alpha}$, $a_{2n_\alpha+1}$, the $(5n_\alpha+1)$-loops, $(5n_\alpha+2)$-loops, and the $2$-loops in $\G$ with a new large use $u_{\beta,n_\alpha}>v_\alpha$.

        On the other hand, if $n_\beta\leq n_\alpha$, when $\alpha$ challenged $\beta$, then $g_\beta^{D_p}$ was not yet defined on any of the loops in the $2n\alpha$th and $(2n_\alpha+1)$st components. Therefore, when $\beta$ defines $g_\beta^{D_p}$ on these components, it uses a large use on every loop.
    \end{proof}

    \begin{lemma}\label{loops in B remain}
        Let $\alpha$ be an $R_e^q$-strategy that acts in \textbf{Case} $\mathbf{2}$ at stage $s$ by defining $v_\alpha$ and challenging higher priority $S$ and $T$-strategies. If $t$ is the next $\alpha$-stage and $\alpha$ has not been initialized, then $D_q[t]\restriction\<v_\alpha,q\>+1=D_q[s]\restriction\<v_\alpha,q\>+1$. In particular, all of the loops in the $2n_\alpha$th and $(2n_\alpha+1)$st components of $\B_q$ remain intact.
    \end{lemma}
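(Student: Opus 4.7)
The plan is to show the stronger statement that $D_q \restriction \<v_\alpha, q\>+1$ is unchanged on $(s, t]$; the claim about the loops then follows because the $(5n_\alpha+3)$- and $(5n_\alpha+4)$-loops carry $D_q$-use exactly $\<v_\alpha, q\>$, and the older $2$-, $(5n_\alpha+1)$-, and $(5n_\alpha+2)$-loops have use $n_\alpha$, which is smaller than $v_\alpha$ (hence than $\<v_\alpha, q\>$) since $n_\alpha$ was chosen large strictly before $v_\alpha$.

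By Lemma \ref{enumeration per stage}, at most one strategy enumerates at each intermediate stage, so I fix such a $\beta$ at some $s' \in (s, t]$ and case on its position relative to $\alpha$. If $\beta$ is a proper prefix of $\alpha$, any enumeration forces $\beta$ to a strictly leftward outcome in $\Lambda$ (from $w_0$ to $s$ for an $N$-strategy, or from $w_0$ to $w_1$ or $w_1$ to $s$ for an $R$-strategy), which would place $p_{s'}$ strictly left of $\alpha$ and initialize $\alpha$, contradicting the hypothesis. If $\beta$ is not a prefix of $\alpha$, then, since $\alpha$ is neither eligible nor initialized between $s$ and $t$, the true path $p_{s'}$ must lie strictly to the right of $\alpha$, so $\beta \subseteq p_{s'}$ lies to the right of $\alpha$ as well; but $\alpha$'s move from $w_0$ to $w_1$ at stage $s$ initializes everything to the right of $p_s$, so $\beta$ was initialized at or after stage $s$.

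Once initialized at or after $s$, $\beta$'s new parameters are chosen large, hence greater than $\<v_\alpha, q\>$. This handles $x_\beta$ directly for an $N$-strategy and $v_\beta$ for an $R$-strategy. The remaining delicate case is when $\beta$ is an $R$-strategy and enumerates $u_{\gamma, n_\beta}$ for a $T$-ancestor $\gamma$ with $\gamma^\frown\<\infty\>\subseteq\beta$; here $u_{\gamma, n_\beta}$ is $\gamma$'s parameter, not $\beta$'s. The resolution is that $n_\beta$ itself was chosen freshly large after $\beta$'s reinitialization, so no $u_{\gamma, n_\beta}$ value could have been declared at or before $s$ (that index did not yet exist), and when $\gamma$ eventually defines it, the large-number convention forces it above $\<v_\alpha, q\>$ as well.

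Combining these cases, no number at most $\<v_\alpha, q\>$ enters any $A_r$ with $r \leq q$ on $(s, t]$, so $D_q \restriction \<v_\alpha, q\>+1$ is unchanged. In particular, $\<v_\alpha, q\>$ itself stays outside $D_q$, and all loops in the $2n_\alpha$th and $(2n_\alpha+1)$st components of $\B_q$ survive intact at stage $t$. The main obstacle is the bookkeeping around $u_{\gamma, n_\beta}$-style enumerations by reinitialized $R$-strategies to the right of $\alpha$; the freshness of $n_\beta$ is the key observation that rules them out.
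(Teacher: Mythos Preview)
Your argument is correct and follows essentially the same route as the paper's proof: case on whether an enumerating strategy $\beta$ is a proper prefix of $\alpha$ (forcing initialization) or lies to the right of $\alpha^\frown\langle w_1\rangle$ (hence freshly initialized with large parameters). You are simply more explicit than the paper about the $u_{\gamma,n_\beta}$ enumerations, correctly noting that the freshness of $n_\beta$ forces any later-defined $u_{\gamma,n_\beta}$ to exceed $\langle v_\alpha,q\rangle$; the paper folds this into the phrase ``choose new large parameters.''
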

    \begin{proof}
        At stage $s$, $\alpha$ takes the $w_1$ outcome. By the proof of Lemma \ref{enumeration per stage}, no other strategy enumerates numbers at stage $s$. Since the strategies extending $\alpha^\frown\<w_1\>$ and to the right of $\alpha$ choose new large parameters, none can enumerate a number below $\<v_\alpha,q\>$ into $D_q$. If a strategy $\beta\subseteq\alpha$ enumerates a number, the path moves left, initializing $\alpha$. Therefore, unless $\alpha$ is initialized, no number below $\<v_\alpha,q\>$ can enter $D_q$ before stage $t$.
    \end{proof}

    \begin{lemma}\label{after R success}
        Let $\alpha$ be an $R_e^q$-strategy that acts in \textbf{Case} $\mathbf{3}$ at stage $s$ and takes the $s$ outcome. Unless $\alpha$ is initialized, no number below the uses of the loops in the $2n_\alpha$th and $(2n_\alpha+1)$st components of $\B_q$ or below $m_q$ is enumerated into $D_q$ after stage $s$.
    \end{lemma}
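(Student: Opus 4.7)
The plan is to argue by contradiction and do a case analysis on the position of the enumerating strategy $\beta$ on the tree of strategies. Suppose toward contradiction that some $\beta$ enumerates a number $n$ into $A_{q'}$ for some $q'\leq q$ at a stage $t>s$, with $n$ below a loop use in the $2n_\alpha$th or $(2n_\alpha+1)$st component of $\B_q$ or below $m_\alpha$. By Lemma~\ref{enumeration per stage}, at most one strategy enumerates at each stage, and by construction only $N$- and $R$-strategies enumerate. Note also that $\alpha$ only ever takes outcomes in $\{w_0,w_1,s\}$, so any $\beta\supseteq\alpha^\frown\<o\>$ for $o\in\{\infty,w_2,w_3,\ldots\}$ is never eligible to act and can be ignored.

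First I would handle the case that $\beta$ is strictly to the left of $\alpha$ in the tree, which includes $\beta\subset\alpha$: for $\beta$ to enumerate at stage $t$ it must satisfy $\beta\subseteq p_t$, and since enumeration moves $\beta$ to a strictly smaller outcome ($w_1$ or $s$ from $w_0$, or $s$ from $w_1$), the new true path $p_t$ lies strictly to the left of $\alpha$, initializing $\alpha$ at stage $t$ and contradicting the hypothesis. Second, suppose $\beta$ extends $\alpha^\frown\<w_0\>$ or $\alpha^\frown\<w_1\>$, or lies strictly to the right of $\alpha$: since $p_s\supseteq\alpha^\frown\<s\>$ is strictly to the left of such $\beta$, $\beta$ is initialized at stage $s$. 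Third, suppose $\alpha^\frown\<s\>\subseteq\beta$: then $\beta$ is either first eligible at stage $s$ or later, or was previously initialized when $\alpha$ switched to outcome $s$ from $w_1$.

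In the second and third cases, $\beta$ chooses its parameters afresh at some stage $\geq s$, and they are chosen large. The loop uses for the relevant components of $\B_q$ at stage $s$ are bounded by $\max(s,\<v_\alpha,q\>)$, and $m_\alpha$ was defined prior to stage $s$, so any large parameter $\beta$ chooses ($x_\beta$ for an $N$-strategy, or $n_\beta$ and $v_\beta$ for an $R$-strategy) exceeds both bounds. The one subtlety, and the main obstacle I expect, is the enumeration of a lifted $T$-use $u_{\gamma,n_\beta}$ by an $R$-strategy $\beta$, since this number is not chosen by $\beta$ itself. I would handle this exactly as in the proof of Lemma~\ref{enumerating number less than T-uses}: when $\beta$ is initialized and picks $n_\beta$ large, $\gamma$ cannot yet have defined $u_{\gamma,n_\beta}$, so when $\gamma$ later does so it must choose $u_{\gamma,n_\beta}>n_\beta$, and hence $u_{\gamma,n_\beta}$ also exceeds the loop uses and $m_\alpha$. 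This closes all cases and yields the desired contradiction.
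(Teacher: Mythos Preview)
Your proposal is correct and follows essentially the same approach as the paper, which simply remarks that the proof is almost identical to that of Lemma~\ref{N restraint lemma}. Your case analysis on the position of the enumerating strategy $\beta$ mirrors that argument, and you give more detail than the paper does---in particular, your explicit treatment of the $u_{\gamma,n_\beta}$ enumerations (via the same reasoning as in Lemma~\ref{enumerating number less than T-uses}) is a point the paper leaves implicit.
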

    \begin{proof}
        The proof of this lemma is almost identical to the proof of Lemma \ref{N restraint lemma}.
    \end{proof}

We now state and prove the verification lemma for our construction.

    \begin{lemma}[Main Verification Lemma]\label{verification lemma}
    Let $\pi=\liminf_s \pi_s$ be the true path of the construction, where $\pi_s$ denotes the current true path at stage $s$ of the construction. Let $\alpha\subset p$.
    \begin{itemize}
        \item[(1)] If $\alpha$ is an $N_e^p$-strategy, then there is an outcome $o$ and an $\alpha$-stage $t_\alpha$ such that for all $\alpha$-stages $s\geq t_\alpha$, $\alpha$ takes outcome $o$ where $o$ ranges over $\{s,w_0\}$.
        \item[(2)] If $\alpha$ is an $S_e$-strategy, then either $\alpha$ takes outcome $\infty$ infinitely often or there is an outcome $w_n$ and a stage $\hat{t}$ such that for all $\alpha$-stages $s>\hat{t}$, $\alpha$ takes outcome $w_n$. If $\G\cong\mathcal{M}_e$, then $\alpha$ takes the $\infty$ outcome infinitely often and $\alpha$ defines a partial embedding $f_\alpha:\G\to\mathcal{M}_e$ which can be extended to a computable isomorphism $\hat{f}_\alpha:\G\to\mathcal{M}_e$.
        \item[(3)] Let $\alpha$ be a $T_i^p$-strategy. If $\G\cong\mathcal{M}_i^{D_p}$, then $\alpha$ takes the $\infty$ outcome infinitely often, and $\alpha$ defines a partial embedding $g_\alpha^{D_p}:\G\to\mathcal{M}_i^{D_p}$ which can be extended to a $D_p$-computable isomorphism $\hat{g}_\alpha^{D_p}:\G\to\mathcal{M}_i^{D_p}$.
        \item[(4)] If $\alpha$ is an $R_e^q$-strategy, then there is an outcome $o$ and an $\alpha$-stage $t_\alpha$ such that for all $\alpha$-stages $s\geq t_\alpha$, $\alpha$ takes outcome $o$ where $o$ ranges over $\{s, w_1, w_0\}$.
    \end{itemize}
    In addition, $\alpha$ satisfies its assigned requirement.
    \end{lemma}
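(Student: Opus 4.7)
The plan is induction on $|\alpha|$ for $\alpha\subset TP$. Combining the inductive hypothesis applied to every proper prefix of $\alpha$ with Lemma~\ref{Observation 2} and the general fact that strategies off $TP$ act only finitely often, we obtain a stage $s^*$ after which $\alpha$ is never initialized and no further $R$-challenge reaches $\alpha$. After $s^*$ we split by the type of $\alpha$.

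For (1) and (4), the analysis is the standard finite-outcome computation. For an $N_e^p$-strategy $\alpha$, the witness $x_\alpha$ is fixed after $s^*$: either $\Phi_e^{\overline{D_p}}(x_\alpha)$ never converges to $0$ and outcome $w_0$ is final (with $A_p(x_\alpha)=0$ providing satisfaction), or at some stage we enumerate $x_\alpha$ into $A_p$ and, by Lemma~\ref{N restraint lemma}, preserve the computation to give outcome $s$ forever. For an $R_e^q$-strategy $\alpha$, either $\Phi_e^{D_q}$ never exhibits the putative isomorphism on the $2n$th and $(2n+1)$st components and $w_0$ is final, or $\alpha$ moves to $w_1$, attaches the new loops, enumerates the appropriate $u_{\gamma,n}$ into $A_p$'s, and challenges each $\beta^\frown\<\infty\>\subseteq\alpha$. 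By the inductive hypothesis on (2) and (3), every such $\beta$ eventually meets its challenge (trivially if $\G$ is not isomorphic to its target graph). Lemmas~\ref{restraint by R stays} and \ref{loops in B remain} preserve the setup, so $\alpha$ reaches Case~3, homogenizes, enumerates $v_\alpha$, and takes outcome $s$ forever; Lemma~\ref{after R success} then guarantees $\Phi_e^{D_q}$ is not an isomorphism.

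For (2), after $s^*$ the parameter $n_\alpha$ is non-decreasing, since the only mechanism that decreases it (an $R$-challenge from below) has ceased. Either $n_\alpha$ stabilizes at some $n$, in which case $\mathcal{M}_e$ lacks a copy of the $2n$th or $(2n+1)$st component, the requirement is vacuous, and outcome $w_n$ is final; or $n_\alpha\to\infty$, outcome $\infty$ recurs infinitely often, and $f_\alpha$ is a partial computable embedding. In the latter case we extend to $\hat{f}_\alpha$ by computably filling in the matches for the homogenizing $(5m+1)$- and $(5m+2)$-loops added by diagonalizing $R$-strategies below $\alpha$; rigidity of $\G$ (Lemma~\ref{Observation 3}) makes the extension canonical, and Lemma~\ref{Observation 4} forces $\hat{f}_\alpha$ to be an isomorphism.

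The main obstacle is (3), because now $n_\alpha$ can decrease both via $D_p$-injury in Step~1 of Case~3 and via $R$-challenges in Case~2. Assume $\G\cong\mathcal{M}_i^{D_p}$. Lemma~\ref{true copies remain} furnishes, for each $m$, a stage past which the true copies of the $2m$th and $(2m+1)$st components in $\mathcal{M}_i^{D_p}$ are the oldest and lexicographically least and remain so. Past $s^*$ no further $R$-challenge reaches $\alpha$, so the next $\alpha$-stage at which $\alpha$ is ready to extend beyond index $m$ it lands on the true copies with a large use $\<u_{\alpha,m},p\>$. By Lemma~\ref{T-uses are greater than uses for loops} the loop-uses lie below $u_{\alpha,m}$, so Step~1 of Case~3 never subsequently pushes $n_\alpha$ below $m$, and by Lemma~\ref{T-maps remain} the map persists between $\alpha$-stages. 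Inductively in $m$, each pair of components is stably matched, so $n_\alpha\to\infty$ and $\infty$ is taken infinitely often. We then extend $g_\alpha^{D_p}$ to $\hat{g}_\alpha^{D_p}$ by $D_p$-computably filling in the homogenizing loops from diagonalizing $R$-strategies exactly as in (2); rigidity forces $\hat{g}_\alpha^{D_p}$ to be the required isomorphism.
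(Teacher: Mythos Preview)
Your setup of $s^*$ contains a genuine error that undermines parts (2) and (3). You assert that after $s^*$ ``no further $R$-challenge reaches $\alpha$,'' justified by the inductive hypothesis on proper prefixes, Lemma~\ref{Observation 2}, and the fact that strategies off $TP$ act only finitely often. But challenges to an $S$- or $T$-strategy $\alpha$ come from $R$-strategies $\beta$ with $\alpha^\frown\langle\infty\rangle\subseteq\beta$; these are \emph{extensions} of $\alpha$, not prefixes, so the inductive hypothesis says nothing about them. Lemma~\ref{Observation 2} only bounds each individual $\beta$ to one challenge, and while strategies off $TP$ contribute finitely many challenges, there are infinitely many $R$-strategies on $TP$ extending $\alpha^\frown\langle\infty\rangle$ (one for every $R$-requirement), each of which challenges $\alpha$ once. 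So $\alpha$ is challenged infinitely often, forever.

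This breaks your arguments for (2) and (3). In (2) you claim $n_\alpha$ is eventually non-decreasing, but it is not: each new $\beta$ on the true path may reset $n_\alpha$ down to $n_\beta$. The paper handles this differently, observing that each such $n_\beta$ is chosen large when $\beta$ first acts, so for any fixed threshold only finitely many $\beta$ have $n_\beta$ below it; hence $n_\alpha[s]\to\infty$ despite infinitely many drops. In (3) the same problem recurs, and is compounded because when $\beta$ challenges with $q<p$ it enumerates $u_{\alpha,n_\beta}$ into $A_p$, undefining $g_\alpha^{D_p}$ on those components; your sentence ``Past $s^*$ no further $R$-challenge reaches $\alpha$, so the next $\alpha$-stage \ldots\ it lands on the true copies'' is therefore unjustified. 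The paper instead argues that $\alpha$ meets every such challenge (via Lemma~\ref{true copies remain} and Lemma~\ref{T-uses are greater than uses for loops}) and again uses that only finitely many challenges target indices below any fixed bound. Your overall strategy is right, but you must replace ``challenges cease'' with ``challenges below any fixed index cease.''
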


    \begin{proof}
    We first prove ($1$). Let $\alpha\subseteq \pi$ be an $N_e^p$-strategy and let $s_0$ be the least stage such that for all $s\geq s_0$, $\alpha\leq_L \pi_s$. Let $x_\alpha$ be its parameter at stage $s_0$. Suppose that at every $\alpha$-stage $s\geq s_0$, either $\Phi_e^{\widehat{D_p}}(x_\alpha)[s]\uparrow$ or $\Phi_e^{\widehat{D_p}}(x_\alpha)[s]\downarrow\neq 0$. Then, $\alpha$ takes the $w_0$ outcome at every $\alpha$-stage $s\geq s_0$. Furthermore, $\Phi_e^{\widehat{D_p}}(x_\alpha)$ either diverges or converges to a number other than $0$. Since $x_\alpha\not\in A_p$, $\alpha$ has met $N_e^p$.

    Otherwise, there is some $\alpha$-stage $t>s_0$ where $\Phi_e^{\widehat{D_p}}(x_\alpha)[t]\downarrow=0$. At stage $t$, $\alpha$ enumerates $x_\alpha$ into $A_p$ and takes the $s$ outcome. Since $\alpha$ is never initialized, it takes the $s$ outcome at every $\alpha$-stage $s\geq t$. Furthermore, by Lemma \ref{N restraint lemma} we have that
    \[
    \Phi_e^{\widehat{D_p}}(x_\alpha)=\Phi_e^{\widehat{D_p}}(x_\alpha)[t]=0\neq 1=A_p(x_\alpha),
    \]
    and so $N_e^p$ is satisfied.

    For ($2$), let $\alpha\subseteq \pi$ be an $S_e$-strategy and let $s_0$ be the least stage such that for all $s\geq s_0$, $\alpha\leq_L \pi_s$. Suppose that $\alpha$ only takes the $\infty$ outcome finitely often. Fix an $\alpha$-stage $s_1>s_0$ such that $\alpha$ does not take the $\infty$ outcome at any $\alpha$-stage $s\geq s_1$. Suppose that $\alpha$ takes the $w_n$ outcome at stage $s_1$. There are two cases to consider.

    If $\alpha$ is not challenged at stage $s_1$, then $\alpha$ acts as in \textbf{Case} $\mathbf{3}$ of the $S_e$-strategy. Since $\alpha$ cannot be challenged by a requirement extending $\alpha^\frown\<w_n\>$, $\alpha$ remains in \textbf{Case} $\mathbf{3}$ at future $\alpha$-stages unless it finds copies of the $2n$th and $(2n+1)$st components of $\G$ in $\mathcal{M}_e$. However, if it finds these copies, it would take the $\infty$ outcome, and so $\alpha$ must not ever find these copies. Hence, $\alpha$ takes the $w_n$ outcome at every $\alpha$-stage $s\geq s_1$. Moreover, $\mathcal{M}_e$ does not contain copies of the $2n$th and $(2n+1)$st components of $\G$ and so $\mathcal{M}_e$ is not isomorphic to $\G$. 

    If $\alpha$ is challenged at stage $s_1$, then $\alpha$ acts as in \textbf{Case} $\mathbf{2}$ of the $S_e$-strategy. By a similar argument to the one above, $\alpha$ can never meet this challenge by finding images for the new loops in the $2n$th and $(2n+1)$st components of $\G$. Therefore, $\mathcal{M}_e\not\cong\G$ and $\alpha$ takes the $w_n$ outcome for all $\alpha$-stages $s\geq s_1$.

    From these two cases, it follows that if $\G\cong\mathcal{M}_e$, then $\alpha$ must take the $\infty$ outcome infinitely often. Let $n_\alpha[s]$ denote the value of $n_\alpha$ at the end of stage $s$ (i.e., $n_\alpha$ could be possibly redefined by a challenging strategy during stage $s$). We claim that the value of $n_\alpha[s]$ goes to infinity as $s$ goes to infinity. Suppose that $\alpha$ takes the $\infty$ outcome at a stage $s>s_0$. Either $n_\alpha[s]=n_\alpha[s-1]+1$ or $n_\alpha[s]=n_\beta[s]$ for some $R$-strategy $\beta$ where $\alpha^\frown\<\infty\>\subseteq\beta$ and $n_\beta[s]\leq n_\alpha[s-1]$. There can only be finitely many such $\beta$ with $n_\beta[s]\leq n_\alpha[s-1]$. Once those strategies have challenged $\alpha$ (if ever), the value of $n_\alpha$ cannot drop below $n_\alpha[s-1]$ again. As $\alpha$ takes the $\infty$ outcome infinitely often, it follows that $n_\alpha[s]$ goes to infinity as $s$ increases.

    Let $f_\alpha=\bigcup\limits_{s\geq s_0} f_\alpha[s]$ be the limit of the partial $f_\alpha[s]$ embeddings for $s\geq s_0$. By Lemma \ref{Observation 4}, it remains to show that $f_\alpha$ can be computably extended to an embedding $\hat{f}_\alpha$ which is defined on all of $\G$. Note that no strategy $\beta\subseteq\alpha$ can add loops to $\G$ after stage $s_0$ or else the current true path would move to the left of $\alpha$ in the tree. Since $n_\alpha[s]\to\infty$ as $s\to\infty$, for each $k$, there is an $\alpha$-stage $s_k$ at which the $n_\alpha$ parameter starts with value $k$ and $\alpha$ takes the $\infty$ outcome. At this stage, $\alpha$ found a copy of the $2k$th and $(2k+1)$st components of $\G$ (which consist of at least the initial set of loops) in $\mathcal{M}_e$ and defined $f_\alpha[s_k]$ on these loops. Therefore, $f_\alpha$ is defined on all of the initial loops attached to each $a_{2k}$ and $a_{2k+1}$. 

    Only $R$-strategies $\beta$ such that $\alpha^\frown\<\infty\>\subseteq\beta$ can add loops to components on which $f_\alpha$ is already defined. If such a strategy $\beta$ adds loops as in \textbf{Case} $\mathbf{3}$ of its action, then it challenges $\alpha$ to find copies of these new loops. Since $\alpha$ takes the $\infty$ outcome infinitely often, it meets this challenge and extends $f_\alpha$ to be defined on the loops created by $\beta$. Then, $\beta$ adds the homogenizing loops as in \textbf{Case} $\mathbf{4}$ of its action. If $\G\cong\mathcal{M}_e$, then these homogenizing loops will have copies in $\mathcal{M}_e$. Suppose the $2n$th and $(2n+1)$st components of $\G$ have homogenizing loops, then because $\G\cong\mathcal{M}_e$, then only the nodes $f_\alpha(a_{2n})$ and $f_\alpha(a_{2n+1})$ in $\mathcal{M}_e$ will have copies of the respective homogenizing loops. So, we can computably extend $f_\alpha$ to $\hat{f}_\alpha$ by mapping the homogenizing loops to these copies, and thus $\hat{f}_\alpha$ is the computable isomorphism which satisfies the $S_e$ requirement.

    For ($3$), let $\alpha\subseteq \pi$ be a $T_i^p$-strategy and let $s_0$ be the least stage such that $\alpha\leq_L \pi_s$ for all $s\geq s_0$. If $\G\not\cong\mathcal{M}_i^{D_p}$, then we satisfy the $T_i^p$ requirement trivially. So, suppose $\G\cong\mathcal{M}_i^{D_p}$.

    We claim that $\alpha$ takes the $\infty$ outcome infinitely often and that $n_\alpha[s]\to\infty$. By Lemma \ref{true copies remain}, the true copies of each pair of components will eventually appear in $\mathcal{M}_i^{D_p}$ and become the oldest copies of these components. If $g_\alpha^{D_p}$ maps the $2m$th and $(2m+1)$st components of $\G$ to fake copies in $\mathcal{M}_i^{D_p}$, then by Lemma \ref{T-uses are greater than uses for loops}, when a number less than $l_m[s]$ enters $D_p$ to remove the fake copies, then the map $g_\alpha^{D_p}[s]$ also becomes undefined on those components. Therefore, for each $n$, $\alpha$ will eventually define $g_\alpha^{D_p}[s]$ correctly on the $2n$th and $(2n+1)$st components of $\G$, mapping them to the true copies in $\mathcal{M}_i^{D_p}$.

    For the same reason, $\alpha$ will also meet each challenge after $s_0$ by an $R$-strategy extending $\alpha^\frown\<\infty\>$. It follows that $n_\alpha[s]\to\infty$ as $s\to\infty$ and that $g_\alpha^{D_p}=\bigcup\limits_{s\geq s_0} g_\alpha^{D_p}[s]$ will correctly map all loops in $\G$ into $\mathcal{M}_i^{D_p}$ except the homogenizing loops added in \textbf{Case} $\mathbf{3}$ of an $R$-strategy. 
    
    It remains to show that we can extend $g_\alpha^{D_p}$ in a $D_p$-computable way to an embedding $\hat{g}_\alpha^{D_p}$ defined on all of $\G$. Using the $D_p$ oracle, we can tell when $g_\alpha^{D_p}[s]$ has correctly defined the original $(5m+1)$- and $(5m+2)$-loops from $\G$ into $\mathcal{M}_i^{D_p}$, as well as the $(5m+3)$- and $(5m+4)$-loops added (if ever) to $\G$ by an $R$-requirement. The $D_p$ oracle will then tell us when the correct homogenizing loops show up in $\mathcal{M}_i^{D_p}$ (assuming that they were added to $\G$), so that we can extend $g_\alpha^{D_p}$ in a $D_p$-computable manner.

    For ($4$), suppose $\alpha\subset \pi$ is an $R_e^q$-strategy and let $n_\alpha=n$ be its parameter. Let $s_0$ be the least stage such that $\alpha\leq_L \pi_s$ for all $s\geq s_0$. If $\alpha$ remains in the first part of \textbf{Case} $\mathbf{2}$ from its description for all $\alpha$-stages $s\geq s_0$, then $R_e^q$ is trivially satisfied because $\Phi_e^{D_q}$ is not an isomorphism between $\G$ and $\B_q$, and $\alpha$ takes the $w_0$ outcome cofinitely often. 

    Otherwise, there is an $\alpha$-stage $s_1>s_0$ such that $\Phi_e^{D_q}[s_1]$ maps the $2n$th and $(2n+1)$st components of $\G$ isomorphically into $\B_q$. Then, $\alpha$ carries out all actions described in the second part of \textbf{Case} $\mathbf{2}$. In particular, it defines its target number $v_\alpha$ \textit{after} enumerating all uses $u_{\gamma,n}$ (if they exist) for any $T_i^p$-strategy $\gamma$ of higher priority with $q<p$ in $P$. $\alpha$'s challenge will eventually be met by all $\beta$ such that $\beta^\frown\<\infty\>\subset\alpha$ since $\beta^\frown\<\infty\>\subset \pi$, and so let $s_2>s_1$ be the next $\alpha$-stage. By Lemma \ref{loops in B remain}, $D_q[s_2]\restriction\<v_\alpha,q\>+1=D_q[s_1]\restriction\<v_\alpha,q\>+1$, so the loops added to $\B_q$ at stage $s_1$ remain intact. At stage $s_2$, $\alpha$ enumerates $v_\alpha$ into $A_q$, moves the $(5n+3)$- and $(5n+4)$-loops in $\B_q$, and takes the $s$ outcome at the end of this stage and at every future $\alpha$-stage. By Lemma \ref{restraint by R stays}, for every $T_i^p$-strategy $\beta$ with $\beta^\frown\<\infty\>\subseteq\alpha$, we have that
    \begin{itemize}
        \item $g_\beta^{D_p}[s_2+1]$ is now undefined on the $(5n_\alpha+3)$- and $(5n_\alpha+4)$-loops in the $2n_\alpha$th and $(2n_\alpha+1)$st components of $\G$, and
        \item if $q<p$, then $g_\beta^{D_p}[s_2+1]$ is now undefined on the entirety of the $2n_\alpha$th and $(2n_\alpha+1)$st components of $\G$.
    \end{itemize}
    
    Since $\alpha\subset \pi$, $\alpha$ will never get initialized again. By Lemmas \ref{loops in B remain} and \ref{after R success}, we preserve $D_q\restriction m_\alpha$. Recall that $m_\alpha$ is the use of the computation of $\Phi_e^{D_q}[s_1]$ where $\Phi_e^{D_q}[s_1](a_{2n})=b_{2n}^q$ and $\Phi_e^{D_q}[s_1](a_{2n+1})=b_{2n+1}^q$. So we have that $\Phi_e^{D_q}=\Phi_e^{D_q}[s_1]$, and so $\Phi_e^{D_q}(a_{2n})=b_{2n}^q$ and $\Phi_e^{D_q}(a_{2n+1})=b_{2n+1}^q$. However, $a_{2n}$ is connected to a cycle of length $5n+3$ whereas $b_{2n}$ is connected to a cycle of length $5n+4$, and so $\Phi_e^{D_q}$ cannot be a $D_q$-computable isomorphism between $\G$ and $\B_q$.
    \end{proof}

\section{Conclusion}
The techniques used in the proof of Theorem \ref{main theorem} to make $\G$ computably categorical or not relative to a degree are compatible with techniques to create minimal pairs of c.e.\ degrees. In fact, we can show that we can embed the four element diamond lattice into the c.e.\ degrees in the following way.

\begin{theorem}
    There exists a computable computably categorical directed graph $\G$ and c.e.\ sets $X_0$ and $X_1$ such that
    \begin{itemize}
        \item[(1)] $X_0$ and $X_1$ form a minimal pair,
        \item[(2)] $\G$ is not computably categorical relative to $X_0$ and to $X_1$, and
        \item[(3)] $\G$ is computably categorical relative to $X_0\oplus X_1$.
    \end{itemize}
\end{theorem}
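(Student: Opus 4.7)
The plan is to adapt the priority construction of Theorem \ref{main theorem} with three modifications: replace the family $\{A_p : p \in P\}$ with just two c.e.\ sets $X_0, X_1$; use a single $T_i$-strategy for the join $X_0 \oplus X_1$ and two families $R_e^j$ ($j = 0, 1$) of diagonalization strategies; and replace the independence requirements $N_e^p$ with minimal pair requirements
\[ M_{e,e'} : \text{if } \Phi_e^{X_0} = \Phi_{e'}^{X_1} \text{ is total, then it is computable.} \]
The graph $\G$ and its copies $\B_0, \B_1$ (each $X_j$-computable) are built exactly as in the proof of Theorem \ref{main theorem}, and the $S_e$, $R_e^j$, and $T_i$-strategies carry over almost verbatim. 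The one adjustment needed is for the $T_i$-strategy: its map $g_i$ is defined on each edge with use $\{2u, 2u+1\}$, so that enumerating $u$ into either $X_0$ or $X_1$ destroys the computation. This makes the ``lift the $T$-use before diagonalizing'' interaction from Section \ref{section 2} available uniformly to both the $R_e^0$ and $R_e^1$ strategies.

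The minimal pair requirements are handled by the classical two-outcome strategy on the tree: outcome $\infty$ (infinitely many expansionary stages where $\ell(e, e', s)$ increases) and outcome $f$ (bounded length of agreement). At each expansionary stage the strategy imposes a restraint protecting one of $X_0, X_1$, switching sides whenever the protected side is injured. Under the $f$-outcome the requirement is trivially satisfied; under the $\infty$-outcome the classical minimal pair argument shows the common function $\Phi_e^{X_0} = \Phi_{e'}^{X_1}$ is computable, by reading its value at $n$ off of any sufficiently late expansionary stage whose current protected side has not been re-injured.

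The main obstacle is verifying that the $M$-restraints and the $R$-enumerations coexist on the tree. The standard observation suffices: each $R$-strategy's witness $v_\beta$, and any auxiliary use-lifting number $u_{\alpha, n_\beta}$ enumerated when resolving the interaction of Section \ref{section 2}, is chosen large at a stage after all higher-priority $M$-strategies have fixed their current restraints, so these enumerations cannot injure any higher-priority $M$-preserved computation. Lower-priority $M$-strategies injured by the enumeration are simply initialized and reset. Conversely, the $M$-restraints never block an $R$-strategy, since $R$-witnesses are drawn from arbitrarily large numbers, and a lemma analogous to Lemma \ref{enumeration per stage} continues to hold.

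Verification then follows the Main Verification Lemma, with an added clause for the $M_{e,e'}$-strategies handled by the analysis above. The outcome is c.e.\ sets $X_0, X_1$ forming a minimal pair, each making $\G$ not computably categorical relative to it (and in particular each noncomputable), while $X_0 \oplus X_1$ makes $\G$ computably categorical relative to it, yielding the desired diamond embedding into the c.e.\ degrees.
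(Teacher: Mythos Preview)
The paper states this theorem in its concluding section without proof, noting only that the techniques of Theorem~\ref{main theorem} are compatible with minimal pair constructions. Your proposal carries out exactly that combination, so it matches the paper's intent.

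The outline is correct, but your argument for the $M$/$R$ interaction needs sharpening. Saying that $v_\beta$ and the use-lifting numbers $u_{\gamma,n_\beta}$ are ``chosen large after higher-priority $M$-strategies have fixed their current restraints'' does not cover an $M$-strategy $\alpha$ on the true path with true outcome $\infty$: such an $\alpha$ has no fixed restraint, since its length of agreement grows without bound, and an $R$-strategy $\beta\supseteq\alpha^\frown\langle\infty\rangle$ may well enumerate numbers below the uses of the current $\Phi^{X_0}$/$\Phi^{X_1}$-agreement. What you actually need is two things you left implicit:
\begin{itemize}
    \item $R_e^j$ must lift the $T$-use by enumerating into $X_{1-j}$, not $X_j$, or it destroys its own $\Phi_e^{X_j}$-restraint. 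Your $T$-use convention permits either side, but you should commit to this choice explicitly.
    \item With that choice, the Case~2 enumeration (use-lifting, into $X_{1-j}$) and the Case~3 enumeration ($v_\beta$ into $X_j$) occur at \emph{distinct} $\alpha$-expansionary stages. Combined with the analogue of Lemma~\ref{enumeration per stage} and the fact that strategies to the right of $\alpha^\frown\langle\infty\rangle$ enumerate only numbers chosen after the previous $\alpha$-expansionary stage (hence above the relevant uses), this gives ``at most one side injured between consecutive expansionary stages,'' which is what the Lachlan argument actually requires.
\end{itemize}
Once these points are made explicit your sketch goes through. Your description of the $M$-strategy as ``imposing a restraint\dots switching sides'' is also a bit of a hybrid of the finite-injury and tree presentations; on the tree there is no restraint imposed by $\alpha$ itself---the protection comes entirely from delaying strategies below $\alpha^\frown\langle\infty\rangle$ to expansionary stages.
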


A natural question is whether we can embed bigger lattices into the c.e.\ degrees where elements of the lattice are labelled as either ``c.c.'' or ``not c.c.'' like in the poset case. We restrict to distributive lattices since those are embeddable into the c.e.\ degrees.

\begin{question}\label{lattices}
    Let $L=(L,\land,\lor)$ be a computable distributive lattice and suppose we have a computable partition $L=L_0\sqcup L_1$. Does there exist a computable computably categorical structure $\mathcal{A}$ and an embedding $h$ of $L$ into the c.e.\ degrees where $\mathcal{A}$ is computably categorical relative to each degree in $h(L_0)$ and is not computably categorical relative to each degree in $h(L_1)$?
\end{question}

We end the paper by first discussing a restrictive case where our techniques do not work. Before we state this restrictive case, we observe that we can obtain a version of Theorem \ref{main theorem} where our graph $\mathcal{G}$ is not computably categorical using largely the same construction. Franklin and Solomon in \cite{franklinsolomon2014} showed that every $2$-generic degree $\mathbf{d}$ is low for isomorphism, that is, for every pair of computable structures $\mathcal{A}$ and $\mathcal{B}$, $\mathcal{A}$ is $\mathbf{d}$-computably isomorphic to $\mathcal{B}$ if and only if $\mathcal{A}$ is computably isomorphic to $\mathcal{B}$. So for every $2$-generic $\mathbf{d}$, there exists \textit{no} computable structure $\mathcal{A}$ where $\mathcal{A}$ is not computably categorical but is computably categorical relative to $\mathbf{d}$. This is also optimal in the generic degrees, meaning that for a $1$-generic degree $\mathbf{c}$, we can apply techniques from this paper to build a computable structure that can change its behavior from being not computably categorical to being computably categorical relative to $\mathbf{c}$. A proof of this fact will appear in a sequel paper by Villano \cite{villano2025}.

It is still unknown how categoricity relative to a degree behaves above $\mathbf{0}'$ and strictly below $\mathbf{0}''$. A concrete question in this direction is the following.

\begin{question}
    For degrees in the the interval $[\mathbf{0}',\mathbf{0}'')$, is computable categoricity relative to a degree nonmonotonic like in the c.e.\ degrees?
\end{question}

It is likely that categoricity relative to a degree also has nonmonotonic behavior in the interval $[\mathbf{0}',\mathbf{0}'')$, though a construction to witness this may require a $\mathbf{0}'''$-priority argument. 

\printbibliography

\end{document}